\newcommand{\vs}{\vspace{3mm}}
\newcommand{\Z}{\mathbb{Z}}
\newcommand{\N}{\mathbb{N}}
\newcommand{\F}{\mathbb{F}}
\newcommand{\BB}{\mathcal{B}}
\newcommand{\ep}{\epsilon}
\DeclareMathOperator{\im}{Im}
\DeclareMathOperator{\Int}{int}
\DeclareMathOperator{\supp}{Supp}
\DeclareMathOperator{\Supp}{Supp}
\DeclareMathOperator{\dom}{Dom}
\DeclareMathOperator{\aux}{aux}
\newtheorem{thm}{Theorem}
\newtheorem{result}{Result}[section]
\newtheorem{lem}[result]{Lemma}
\newtheorem{rmk}[result]{Remark}
\newtheorem{cor}[result]{Corollary}
\newtheorem{prp}[result]{Proposition}
\theoremstyle{definition}
\newtheorem*{defn}{Definition}
\theoremstyle{remark}
\DeclareMathOperator{\Leaves}{\bm{\mathsf{Leaves}}}
\DeclareMathOperator{\leaves}{\bm{\mathsf{Leaves}}}
\DeclareMathOperator{\Inner}{\bm{\mathsf{Inner}}}
\DeclareMathOperator{\inner}{\bm{\mathsf{Inner}}}
\newcommand{\hide}[1]{}
\newcommand{\edit}[1]{\textcolor{red}{#1}}
\newcommand{\rough}[1]{}
\definecolor{darkgreen}{RGB}{75,150,75}
\newcommand{\review}[1]{}
\newcommand{\dc}[1]{}
\newcommand{\zh}[1]{}
\newcommand{\hides}[1]{}
\newcommand{\pub}[1]{}
\title{Optimally Reconstructing Caterpillars}
\author{Zach Hunter}
\email{zachary.hunter@exeter.ox.ac.uk}
\date{\today}
\begin{document}

\begin{abstract}
    For a graph $G$, the $\ell$-deck of $G$ is the multiset of induced subgraphs on $G$ having $\ell$ vertices. Recently, Groenland et al. proved that any tree can be reconstructed from its $(8/9+o(1))n$-deck. For the particular case of caterpillar graphs, we show that the $(1/2+o(1))n$-deck suffices, which is asymptotically tight.
\end{abstract}

\maketitle

\section{Introduction}\label{intro}

All graphs in this paper are finite and simple. Given a graph $G$, we define its $\ell$-deck, $\mathcal{D}_\ell(G)$ to be the multiset of subgraphs of $G$ induced by sets of $\ell$ vertices. We say a graph $G$ can be reconstructed by its $\ell$-deck if $\mathcal{D}_\ell(H) = \mathcal{D}_\ell(G)$ implies $H \cong G$.

\vs

The standard graph reconstruction conjecture claims that any graph with $n\ge 3$ vertices can be reconstructed by its $(n-1)$-deck. Nydl \cite{nydl} has shown that for any $\ep> 0$ and any integer $n_0$, there is $n>n_0$ and distinct graphs $A,B$ on $n$ vertices such that $\mathcal{D}_{(1-\ep)n}(A) = \mathcal{D}_{(1-\ep)n}(B)$, thus for general graphs this conjecture (if true) is asymptotically tight.

\vs

However, recently, in a paper by Groenland, Johnston, Scott, and Tan it has been shown that if $G$ is a tree on $n$ vertices, then it can be reconstructed by its $(8/9 +o(1))n$-deck \cite[Theorem~3]{deg}. Thus, for smaller classes of graphs, we can improve the linear coefficient. 

\vs

While the improved upper bound of Groenland et al. is impressive, it is not believed to be asymptotically optimal. In \cite[Problem~1]{deg} it is asked whether the $(1/2+o(1))n$-deck suffices to reconstruct $n$-vertex trees, and it is mentioned that the $(\lfloor n/2\rfloor +1)$-deck may even suffice for all sufficiently large $n$. The latter bound would be best possible, due to examples coming from the family of  ``caterpillar graphs''.

\vs

A \textit{caterpillar graph} is a tree such that the removal of all leaves results in a path. For every $n\ge 4$, it is known that there exist distinct $n$-vertex caterpillar graphs $A,B$ where $\mathcal{D}_{\lfloor n/2\rfloor}(A) = \mathcal{D}_{\lfloor n/2 \rfloor}(B)$. In this paper, we prove this lower bound for reconstructing caterpillar graphs is tight up to an additive constant.

\vspace{1.5mm}

\begin{thm}\label{main}Let $G$ be a caterpillar graph on $n$ vertices. Then $G$ can be reconstructed from its $(n/2 + O(1))$-deck.
\end{thm}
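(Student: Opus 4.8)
\emph{Setup.} Encode a caterpillar $G$ with $k\ge 2$ spine vertices by its spine path $s_1-\cdots-s_k$ together with the sequence $\mathbf a=(a_1,\dots,a_k)$ of pendant‑leaf counts, so $a_1,a_k\ge 1$, $n=k+\sum_i a_i$, and $G$ is determined by $\mathbf a$ up to reversal; the cases $k\le 1$ (stars) are trivial and disposed of at the outset. The whole task is to reconstruct $\mathbf a$ up to reversal from $\mathcal D_\ell(G)$ with $\ell=n/2+O(1)$. Throughout I use Kelly's Lemma: from $\mathcal D_\ell(G)$ one recovers $\mathcal D_{\ell'}(G)$ for all $\ell'\le\ell$ and, more usefully, the number of induced copies in $G$ of every fixed graph on at most $\ell$ vertices. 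A preliminary step, which I will not dwell on, is to observe that these counts already force any $H$ with $\mathcal D_\ell(H)=\mathcal D_\ell(G)$ to be a caterpillar on $n$ vertices: the counts of induced $K_3,C_4,\dots,C_\ell$ being $0$ make $H$ triangle‑free with girth exceeding $\ell$, the counts of induced stars pin its degree sequence to that of $G$, and the count of large independent sets (equivalently, edgeless induced subgraphs on $\ell>n/2$ vertices) rules out a long cycle; so no ambiguity enters from outside the class.

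\emph{Step 1: recover $n$, $k$, and the leaf‑count multiset.} Since a caterpillar (indeed any triangle‑free graph) has every neighbourhood independent, counting induced $K_{1,r}$ gives exactly $\sum_v\binom{\deg v}{r}$ for $2\le r\le \ell-1$; with $|E(G)|=n-1$ this determines the full degree sequence of $G$, \emph{provided} no vertex has degree $>\ell-1$. As every spine vertex (for $k\ge2$) has degree $\ge2$ and every leaf degree $1$, the number of degree‑$1$ vertices is $n-k$, which yields $k$; deleting those $1$'s leaves the multiset $\{a_i+2:\ s_i\text{ interior}\}\cup\{a_1+1,a_k+1\}$, and a short computation with induced double stars (caterpillars with spine word $(p,q)$, $2\le p\ne q$) separates the two end‑values from the interior values, pinning down $\{a_1,\dots,a_k\}$ and the leaf total $n-k$. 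The one exceptional configuration, a single ``super‑heavy'' spine vertex $s_j$ with $a_j>\ell-O(1)>n/2$, is handled directly and separately: there $n-a_j<n/2\le\ell$, so deleting all but $t$ of $s_j$'s leaves ($0\le t\le \ell-(n-a_j)$) yields an induced subgraph of $G$; these ``nearly complete'' deck members are precisely the caterpillars obtained from the small residual caterpillar $G':=G-(\text{leaves of }s_j)$ by growing a star at one fixed vertex, and comparing them for consecutive $t$ both identifies $G'$ and locates $s_j$ within it, while $a_j$ is the top degree. Henceforth assume $a_i\le\ell-O(1)$ for all $i$.

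\emph{Step 2: extract correlation data.} Let $C$ be a caterpillar with spine word $(\ell_1,\dots,\ell_m)$, $\ell_1,\ell_m\ge2$, on at most $\ell$ vertices. Because the two spine‑ends of $C$ have degree $\ge3$, an induced copy of $C$ in $G$ must send the spine of $C$ onto a contiguous block $s_j,\dots,s_{j+m-1}$ of $G$'s spine, \emph{up to} a bounded ``leakage'' in which a spine‑end of $C$ borrows one of $s_j$'s or $s_{j+m-1}$'s spine‑neighbours as a leaf; accounting for this exactly,
\[
\#\{\text{induced }C\text{ in }G\}=\sum_{j}\Bigl(\prod_{i=1}^m\binom{a_{j+i-1}}{\ell_i}+\prod_{i=1}^m\binom{a_{j+i-1}}{\ell_{m+1-i}}\Bigr)+(\text{leakage terms of width }m{+}1,\ m{+}2).
\]
A downward induction on the width (leakage terms at width $w$ involve only counts at widths $>w$) then isolates, for every weight pattern with $m+\sum_i\ell_i\le\ell-O(1)$, the symmetrized factor sum $T(\ell_1,\dots,\ell_m):=\sum_j\prod_i\binom{a_{j+i-1}}{\ell_i}+\sum_j\prod_i\binom{a_{j+i-1}}{\ell_{m+1-i}}$. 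In words: we learn the number of occurrences of every ``weighted factor'' of $\mathbf a$ (and of its reverse), subject to the budget \emph{width $+$ total weight $\le \ell-O(1)$}; in the short‑spine regime $k<\ell$ one may also take $m=k$, giving $\prod_i\binom{a_i}{\ell_i}+\prod_i\binom{a_i}{\ell_{k+1-i}}$ for all $(\ell_i)$ with $\sum_i\ell_i\le\ell-k$, a particularly clean handle on the whole word at once.

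\emph{Step 3: reconstruct the word, and where $n/2$ comes in.} It remains to show $\mathbf a$ is determined up to reversal by the Step‑2 data. The plan is to read the word inward from its two ends: anchoring a weighted factor at $s_1$ (resp.\ $s_k$) is possible because a prefix of $\mathbf a$ corresponds to an induced sub‑caterpillar whose weighted‑factor count the data records, so one peels off the unordered pair $\{(a_1,\dots,a_t),(a_k,\dots,a_{k-t+1})\}$ for growing $t$, using the un‑anchored sums $T(\cdot)$ to decide at each stage how the new left read glues onto the new right read; the procedure can only stall once the two reads have overlapped, which forces $t$ past the midpoint of the spine. A factor witnessing the word \emph{across} its midpoint spans more than $k/2$ spine vertices and, after the Step‑1 normalisation (the leaves are not concentrated on one vertex), at least about half of the $n-k$ leaves, hence has size roughly $k/2+(n-k)/2=n/2$; conversely one such factor of size $n/2+O(1)$ always suffices. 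This trade‑off is exactly the source of the additive constant in Theorem~\ref{main} and of its matching with the $\lfloor n/2\rfloor$‑deck lower bound. The main obstacle is precisely this step: the delicate bookkeeping needed so that the inward reads are stitched into a single word that is genuinely $\mathbf a$ or its reverse and not a chimera agreeing with $\mathbf a$ on one side and with its reverse on the other, together with the quantitative claim that resolving the word near its centre never costs more than $n/2+O(1)$ vertices. The leakage corrections of Step 2 and the super‑heavy‑vertex case of Step 1 are technically fussy but routine once this core is in hand.
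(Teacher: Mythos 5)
Your Steps 1--2 are broadly in the spirit of the paper (Kelly-type counts of small caterpillars do encode binomial data about the leaf-count word $\mathbf a$, and the ``leakage'' you describe is what the paper absorbs cleanly via its degree shift $T$ and auxiliary endpoints), but Step 3 --- which you yourself identify as the core --- contains a genuine gap, and the quantitative claim it rests on is false. First, what Step 2 actually yields is only the aggregated sums $T(\ell_1,\dots,\ell_m)=\sum_j\prod_i\binom{a_{j+i-1}}{\ell_i}+(\text{reversed})$, i.e.\ moments of the window deck summed over all placements $j$; subgraph counts never hand you an individual window, so ``anchoring a weighted factor at $s_1$'' is not available for free. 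The only way to force a factor to sit at a prescribed place is to saturate weights ($\ell_i\approx a_i$), and then the budget width$+$weight is charged the full number of leaves in the window. Second, your claim that after excluding a single super-heavy vertex a factor crossing the spine's midpoint costs only about $k/2+(n-k)/2=n/2$ is wrong: take a caterpillar of diameter $\approx 4n/5$ with all $n-k$ leaves spread over a few spine vertices near the middle (so no vertex is super-heavy). Any full-weight window that crosses the midpoint then costs about $k/2+(n-k)$, which exceeds $n/2$ by a linear amount; this concentrated-leaf configuration is exactly the obstruction the paper singles out as the reason interval/window-deck (``$1$-ball'') reconstruction cannot reach $(1/2+o(1))n$.

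The paper escapes this by never requesting full windows. It reconstructs only the $S$-decks of the word for sparse sets $|S|\le 3$ of width up to $\lfloor k/2\rfloor+1$; the key point is that these sparse decks are determined by their moments of total weight $o(n-k)$ (Lemma~\ref{alg}, a multivariate extension of Scott's prime-counting argument), so each required moment corresponds to counting a caterpillar with at most $\lfloor k/2\rfloor+1$ spine vertices and only $o(n-k)$ leaves --- hence cards of size $n/2+o(n)$ suffice even when the leaves are concentrated. It then proves separately (Lemmas~\ref{func deck} and~\ref{reconstruct bin}) that the $1$-, $2$- and $3$-point decks of half-width determine the word up to reversal, including precisely the anti-chimera stitching across the centre that you flag as ``the main obstacle'' but do not carry out. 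Without an analogue of these two ingredients --- a way to localize information without paying for all the leaves in a window, and a proof that the localized data pin down $\mathbf a$ up to reversal --- your outline does not yield the $n/2+O(1)$ bound.
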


\noindent In fact, the proof of the theorem shows this reconstruction can be done in a particularly strong sense. Namely, that after recognizing $G$ is a tree with diameter $k$, we only need to look at cards with diameter at most $\lfloor k/2\rfloor + 1$ and $o(n-k)$ leaves.

\vs 

We now recall the construction of distinct caterpillar graphs $G_1,G_2$ on $n$ vertices, such that $\mathcal{D}_{\lfloor n/2\rfloor}(G_1) = \mathcal{D}_{\lfloor n/2\rfloor}(G_2) $. Essentially, we construct $G_1,G_2$ respectively by starting with a path on $n-1$ vertices, and then attaching a leaf to a ``central'' vertex in the path (respectively to a vertex which is not central but neighbors a central vertex). This example was observed by N\'ydl \cite{nydltree}. Said example (and generalizations where you have many central leaves for smaller choices of $k$) also demonstrate that considering cards of diameter $\lfloor k/2\rfloor +1$ is necessary to recognize some caterpillars with diameter $k$ (hence this aspect of our result is completely optimal).

\vs

In Section~\ref{preliminaries}, we  establish our definitions and some terminology. Then in Subsection~\ref{summary} we outline our reconstruction procedure, which will be divided into three parts. At its heart, our methods are a more precise utilization of $H$-extensions (a concept introduced in \cite[Section~5.1]{deg}), where we figured out how to efficiently reconstruct a special class of $H$-extensions and then realized how to reconstruct caterpillars with this special class (which carried much less information than the $H$-extensions considered in \cite{deg}).

\vs

In Section~\ref{mom}, we establish a key lemma about equalities between ``moments''. It is essentially a multivariable analogue of \cite[Lemma~11]{deg}. Our assumptions are a bit stronger, which allows for an elementary number theoretic proof. We believe this section may be of independent interest.

\vs

In Section~\ref{reconstructing caterpillars}, we apply this lemma about moments to turn the problem of reconstructing caterpillars into a problem of reconstructing binary strings up to reversal. In Section~\ref{binary strings}, we resolve the problem of reconstructing binary strings.

\vs

We mention that it should be possible to extend our methods to efficiently reconstruct other classes of graphs besides caterpillars. \hide{This is discussed further in Subsection~\ref{generalize}.}

\section{Preliminaries}\label{preliminaries}

\hide{\subsection{Draft 2}

We shall be generalizing the notion of degree sequences in graphs.

A colored graph $C$ is a pair $(G,c)$, where $G = (V,E)$ is a graph and $c:V\to \N$ is a map. A partially colored graph is a triple $(G,V',c)$, where $G = (V,E)$ is graph, $V' \subset V$, and $c:V'\to \N$ is a map. For a colored graph $C = (G,c)$, and $V' \supset V''$, we define $C[V',V'']$ to be the partially colored graph $(G[V'],V'',c|_{V''})$. 

For a colored graph $C$, we let $\mathscr{D}_{\ell,s}(C)$ (the ``deck'' of $C$) denote the multiset of unlabelled partially colored graphs $C[V',V'']$, where $|V'| = \ell, |V''| = s$. 

For a graph $G$, we define $\leaves(G)$ to be the set of $v\in G$ where $\deg(v) = 1$.

For a graph $G$, we let $\varphi_0(G) = (G,c)$ denote the colored graph where $c:v\mapsto \deg(v)$. It is clear that $\varphi_0$ is an injection. We then define the ``pruning'' of $G$, $\varphi(G)$, to be $\varphi_0(G)[V',V']$, where $V' = V(G)\setminus \Leaves(G) = \{v \in G: \deg(v) > 1\}$. We note that if $\varphi$ is non-empty (i.e., not all vertices have degree $<2$), then it is also an injection, since the coloring will ``remember'' all the low-degree vertices which are removed.

For a caterpillar graph $G$ with diameter $k+2$, we have that $\varphi(G)$ will be a colored path with diameter $k$. In Section~\ref{binary strings}, we show that a colored path of length $k$ can be reconstructed from

\subsection{Definitions}}

We shall always consider 0 to be an element of $\N$. We will write $[n]$ to denote $\{1,2,\dots, n\}$, and $[a,b]$ to denote $\{a,a+1,\dots, b\}$. For a set $S$, we write $i+S$ to denote $\{i+s: s \in S\}$. Also, for an event $E$, we will write $I(E)$ to denote the indicator function of $E$.

\vs

We will actually need very little notation about graphs. We will use just a few notations from \cite{deg}. The diameter of graph $G$ is the maximum distance between two vertices $u,v \in V(G)$, when $G$ is a tree this is the same as the number of edges in the longest path in $G$. We let $n_H(G)$ count the number of times a certain graph $H$ appears as a subgraph of $G$ (i.e. the number of subsets $U$ of $V(G)$ where $G[U] \cong H$). \begin{rmk}\label{kelly}(``Kelly's lemma'') If $V(H) \le \ell$, then we can calculate $n_H(G)$ given $\mathcal{D}_\ell(G)$, see Lemma~8 of \cite{deg}.\end{rmk} \begin{rmk}\label{dseq} We can reconstruct the degree sequence of $G$ from $\mathcal{D}_{\sqrt{2n \log(2n)}}(G)$, see Theorem~7 of \cite{deg}.\end{rmk} 

\vs

Lastly, for a graph $G$, we define the interior of $G$, $\inner(G)$, to be the graph remaining after removing all vertices of degree 1 (i.e., leaves) from $G$. We have that a graph $G$ is a caterpillar if its interior is a path.

\vs

We will concern ourselves with several ordered objects (functions, tuples, strings) which will be used somewhat interchangeably. A tuple $t \in \N^s$ is simply a function $t:[s]\to \N$. A binary string $x \in \{0,1\}^s$ is simply a function $x:[s] \to \{0,1\}$. For any function, $f$ we let $\dom(f)$ refer to its domain, the values $x$ where $f(x)$ is defined.

\vs

For a function/tuple/string $f \in \N^s$, we let $f'$ denote it's reversal (i.e. $f'(i) = f(s-i+1)$). We say that $f \sim g$ if $f = g$ or $f = g'$; it is clear that this is an equivalence relation. We also define $|f|_1$ to be $\sum_{i \in [s]} |f(i)|$ and $|f|_\infty$ to be $\max_{i \in [s]}\{|f(i)|\}$.

\vs

Given a set $S \subset [s]$ and $f \in \N^s$, we define $f|_S:[|S|]\to \N;i\mapsto f(S_i)$ (where $S_1 \le \dots \le S_{|S|}$). If $S \not \subset [s]$ and $f \in \N^s$, then $f|_S$ is not defined or is the empty function. We then define $D_S(f)$, the $S$-deck of $f$, to be the multiset of functions $g:[|S|] \to \N$ where the multiplicity of $g$ is $\sum_{i \in \Bbb{Z}} I(g = f|_{i+S})+I(g = f'|_{i+S})$. 

\hide{In other words, $D_S$ is the multiset of all $f|_T$, where $T$ ranges over all sets $T$ which can be obtained from $S$ by applying a translation and (possibly) a reflection. The reason for this unoriented and affine equivalence class is that given a set of vertices in a path graph, and the distance between them, this is the ``best'' equivalence class we can hope to achieve.\edit{revisit this explanation}}

\vs

For two functions $f,g \in \N^s$, we write \[F(f,g) = \prod_{t=1}^s \binom{f(i)}{g(i)}\]to denote the ``oriented embeddings'' of $g$ into $f$.

\vs

For a multiset $M$ of functions $f \in \N^s$, we write $d(M)$ to denote the subdeck (or moments) of $M$, which is the multiset $M^*$ where each $g \in \N^s$ has multiplicity $\sum_{f \in M} F(f,g)$. We let $d_\ell(M)$ (the $\ell$-moment of $M$) denote the restriction of $d(M)$ to functions $g$ where $|g|_1 \le \ell$.

\vs

For a set $S$, we define the \textit{width} of $S$ to be $\Delta(S) = \max(S)-\min(S)+1$. We say $S \prec T$ if $\Delta(S) < \Delta(T)$, and $|S| \le |T|$.

\vs

For a multiset $M$, we will sometimes write $\#(x \in M)$ to denote the multiplicity of the element $x$ within $M$.

\subsection{Plan of attack}\label{summary}

It is desirable to motivate these definitions, and explain how they will be used in the paper. It is quite plausible that our methods can be extended to further classes of graphs (especially trees\hide{ as we will discuss in Section~\ref{generalize}}), thus our outline is written a bit more generally to reflect this.

\vs 

Consider a graph $G=(V,E)$ equipped with the coloring $c:V\to \N ; v\mapsto \deg(v)$. We define the ``labelled pruning'' of $G$ to be the labelled graph $G[\Inner(G)]$ equipped with the coloring $c|_{\Inner(G)}$. Letting $\varphi$ be the map which sends $G$ to its labelled pruning, we see that $\varphi$ is an injection (unless $G$ has a connected component with $<3$ vertices, which is a case we need not consider\footnote{Furthermore, since the $2$-deck of a graph $G$ tells us how many edges and vertices $G$ has, one can see that if $\varphi(G_1) = \varphi(G_2)$ and $\mathcal{D}_2(G_1) = \mathcal{D}_2(G_2)$ then $G_1 \cong G_2$. Hence, if we can reconstruct $\varphi(G)$ from $\mathcal{D}_\ell(G)$ for $\ell \ge 2$, then we can reconstruct $G$ from its $\ell$-deck.}). The methods of this paper boil down to reconstructing $\varphi(G)$ when $G$ is a caterpillar. 

\vs

By definition, since $G$ is a caterpillar, we will have that $\varphi(G)$ is a labelled path graph. Hence, we can instead think of $\varphi(G)$ as being a function $f\in \N^s/\sim$. Thus, at a high-level our approach involves three steps:
\begin{enumerate}
    \item confirm $G$ is a caterpillar graph, and recognize its diameter,
    \item use $\mathcal{D}_\ell(G)$ to reconstruct $D_S(\varphi(G))$ for some appropriate sets $S$,
    \item use the $S$-decks $D_S(\varphi(G))$ to reconstruct $\varphi(G)$. 
\end{enumerate}Because caterpillars are quite structured, step (1) becomes trivial thanks to results from recent literature \cite{deg,tree} (this is covered at the start of Section~\ref{reconstructing caterpillars}). 

\vs

This leaves steps (2) and (3). The key question becomes ``what are the appropriate sets $S$ in step (2)''? As we make the sets $S$ larger (with respect to inclusion as well as width), the more information the $S$-decks convey --- this results in a trade-off where step (2) becomes harder to do efficiently yet step (3) becomes easier to handle. To give some context for this trade-off, we consider the extremes. 
\hide{

On one hand, we need to be able to use these sets to complete step (3). This encourages us to consider larger sets $S$. At a minimum, due to constructions discussed in Section 1, it will be necessary to include sets $S$ with width at least half the diameter of $G$.

However, we must know a way to reconstruct $D_S(\varphi(G))$ from $\mathcal{D}_\ell(G)$ without taking $\ell$ to be too large. This of course becomes more difficult as we make the sets $S$ larger/wider.}

\vs

First, what if we only use sets $S$ which are intervals (so that they are as large as possible given their width)? This should make step (3) as easy as possible, in the sense that if $\Delta(S) = w$ then the $[w]$-deck will carry at least much information as the $S$-deck and thus be at least as useful in step (3). Nevertheless, constructions mentioned in Section~\ref{intro} show that we will need to use some sets $S$ with width at least roughly half the diameter $G$ (or add some extra steps to our reconstruction process).

\vs

Now, the task of reconstructing $D_{[w]}(\varphi(G))$ had already been implicitly considered in \cite{deg} (in their language, this is essentially equivalent to reconstructing the 1-ball deck of a $w$-vertex path within $G$). However their method of reconstruction (\cite[Lemma~14]{deg}) used inclusion-exclusion, which for some caterpillars required decks that were too large for purposes. In particular, this was poor at handling caterpillars whose leaves were highly concentrated in a certain area (if there was a $w$-vertex path in $G$ which was neighboring $\ell-w$ leaves in $G$, then we were expected to look at the $\ell$-deck; thus if $G$ has diameter $4n/5$ and all its leaves are in the middle third $G$, we seemingly would need the $3n/5$-deck to reconstruct $D_{[2n/5]}(\varphi(G))$, and as a more exteme example if $G$ had a vertex of degree $9n/10$ this approach would require the $9n/10$-deck). We were unable to find a way to more efficiently reconstruct these $S$-decks and were forced to consider other options.

\vs

At the other extreme, we could try only using sets $S$ with 1 element. Here, it will be impossible to do step (3), because any set with one element has width 1 (which is less than half the diameter of $G$ for almost every caterpillar $G$). However, step (2) can be done much more efficiently. Indeed, reconstructing $D_{\{1\}}(\varphi(G))$ follows from reconstructing the degree sequence of $G$, which by \cite[Theorem~7]{deg} can be done by considering the $O(\sqrt{n\log(n)})$-deck of $G$. It is noteworthy that this bound is sublinear, this was achieved by an algebraic approach which bounded the number of shared moments between distinct sequences.

\vs 

Now let's discuss the sets $S$ used in this paper. We were able to show that step (3) can be done when provided the $S$-decks for all sets $S$ with up to 3 elements and width up to $\lfloor k/2+1\rfloor$ (where $k$ is the diameter of $G$). This is Lemma~\ref{func deck}, which we prove in Section~\ref{binary strings}. 

\vs

At the same time, we also were able to efficiently do step (2) for these sets. This was done by extending the algebraic techniques used in \cite[Theorem~7]{deg} in two substeps:
\begin{enumerate}
    \item[(2')] use graph enumerating arguments to show that we can reconstruct $\ell_0$-th moment of $D_S(\varphi(G))$ using $\mathcal{D}_{\Delta(S)+\ell_0}(G)$,
    \item[(2'')] show that when $|S|\le 3$, that we can reconstruct the $S$-deck by calculating $o(n)$ of its moments. 
\end{enumerate}

\vs

\hide{\edit{rewrite following two paragraphs}When looking at our methods, it is easy to draw some parallels to the recent paper \cite{deg}. In Section~5.1 of \cite{deg}, they introduce the useful ideas of ``$H$-extensions'' and ``reconstructing $d$-balls''. Informally, this involved copies of nested subgraphs $H\subset H^+\subset G$ where we first fix a copy of $H$ in $G$ and then take $H^+$ to the maximal subgraph of $G$ which ``extends'' $H$ in some sense. In particular, \cite{deg} focused on $d$-balls, which are extensions where we have $V(H^+)$ be the set of all vertices with distance at most $d$ from some vertex in $V(H)$.  

Being able to count $d$-balls in a graph is very useful, as their maximality conveys a lot of information (for example, reconstructing the 1-balls of singletons let's us know the degree sequence of the graph we are considering). Lemma~14 of \cite{deg} provided a way to reconstruct $d$-balls, and this was used to reconstruct trees with high diameter using the $(2/3+o(1))n$-deck. The problem with \cite[Lemma~14]{deg} is that the proof required us to take $\ell$ to be as big as the largest 1-ball, thus $\ell-n/2$ would have linear error if we tried to reconstruct 1-balls of long (having diameter $k/2$) paths in caterpillars with concentrated degree (for example, consider caterpillars with one vertex having $n/2$ leaves, and all other vertices having $0$ or $1$ leaves). Thus, the techniques from \cite{deg} would not be able to reconstruct caterpillars using a $(1/2+o(1))n$-deck. In this paper, instead of reconstructing 1-balls of paths, we reconstruct partial 1-balls which only reveal the degree of a finite subset of vertices in copies of paths. }
\hide{In Section~\ref{binary strings}, we show that if know 

In \cite{deg}, an algebraic result was applied to prove Remark~\ref{dseq}. In the language of our paper, the proof followed from showing that $d_\ell(M)$ was injective for large enough $\ell$ where $M$ was the multiset of degrees appearing in a graph $G$.}

\hide{Consider a graph $G=(V,E)$ equipped with the coloring $c:V\to \N ; v\mapsto \deg(v)$. We define the ``labelled pruning'' of $G$ to be the labelled graph $G[\Int(G)]$ equipped with the coloring $c|_{\Int(G)}$. Letting $\varphi$ be the map which sends $G$ to its labelled pruning, we see that $\varphi$ is an injection. The methods of this paper boil down to reconstructing $\varphi(G)$ when $G$ is a caterpillar.

In Section~\ref{reconstructing caterpillars}, we will associate each caterpillar graph $G$ with a function $f:[k]\to \Bbb{N}$ (where $k+2$ will be the diameter of $G$). Here, the values of $f(i)$ will correspond to the degrees of distinct vertices in $G$. In particular, we will have that $|f|_1 \le n$, and $|\dom(f)| = k \le n$. \hide{We call a function $f$ is $n$-bounded if both these conditions hold.}This naturally inspires the following definition.
\begin{defn} A function $f:[k] \to \N$ is $n$-bounded if $|f|_1 \le n$ and $k \le n$. 
\end{defn}

In Section~\ref{mom}, we will essentially show that as $n\to \infty$, we can reconstruct $D_S$ of $n$-bounded functions in an efficient way, when $|S| =O(1)$. More specifically, we shall define a family of ``$(n,s)$-bounded'' multisets, where $M$ is ``$(n,s)$-bounded'' whenever $M = D_S(f)$ for some $n$-bounded function $f$ and some set $S$ where $|S|= s$. We will show that for any fixed $s$, there exists $\ell = o(n)$ such that $d_\ell(M)$ is unique for all $(n,s)$-bounded multisets. 

\vs

Here is our formal definition of an $(n,s)$-bounded multiset.
\begin{defn} A multiset $M$ is $(n,s)$-bounded if: $M$ takes value in $\N^s$, $|M| \le 2 n$ and $\sum_{\vec{x} \in M} |\vec{x}|_1 \le 2sn$.
\end{defn}
\begin{rmk} If $f$ is $n$-bounded, and $|S| = s$, then $D_S(f)$ is $n$-bounded. This follows from the fact that each $i \in \dom(f)$ appears only $2s$ times in translates of $S$ and its reversal. 
\end{rmk}

\vs

In Section~\ref{binary strings}, we will show that if we know $D_S(f)$ for all $S$ where $\Delta(S) \le k/2,|S| \le 3$, then we can reconstruct $G$. In Section~\ref{reconstructing caterpillars}, we will demonstrate how to reconstruct $d_\ell(D_S(f))$, for all the necessary sets $S$, so by applying Section~\ref{mom}, we get our result.}

\section{Shared moments in multiple dimensions}\label{mom}

We say two sequences of tuples, $\alpha, \beta \in (\N^s)^m$, are related to each other by a permutation, if there is $\pi \in S_m$ such that $\alpha_i = \beta_{\pi(i)}$ for all $i \in [m]$.

\vs

The main result of this section is the following. Essentially, it says that to reconstruct a multiset $M\subset \N^s$ which we know is ``$(n,s)$-bounded'' in some sense, then it suffices to know the $\gamma_s(n)$-th moment of $M$. Or in other words, it says $d_{\gamma_s(n)}(\cdot)$ is injective on the set of ``$(n,s)$-bounded'' multisets.

\begin{lem}\label{alg} For each positive integer $s$, there exists a function $\gamma_s:\N\to\N$ of sublinear growth (in particular, we may take $\gamma_s(n) \le (1+o_s(1))\sqrt{n}\log n$) so that the following holds. 

\vs

Let $\alpha,\beta \in (\N^s)^m$ be two sequences that are not related to each other by a permutation. Suppose $\sum_{t=1}^m |\alpha_t|_1 \le n$, $\sum_{t=1}^m |\beta_t|_1 \le n$, and $\sum_{t=1}^m F(\alpha_t,\vec{i})= \sum_{t=1}^m F(\beta_t,\vec{i}) $ for all $\vec{i} \in \{0,\dots,\ell\}^s$. Then $\ell < \gamma_s(n)$.
\end{lem}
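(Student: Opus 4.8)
The plan is to reduce the multivariable statement to a univariate one, then attack the univariate problem with a generating-function/Vandermonde argument analogous to the proof of Remark~\ref{dseq} (i.e. \cite[Theorem~7]{deg}), but pushed through the stronger hypotheses to get an elementary, purely number-theoretic argument.

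First I would set up the encoding. Given a tuple $\vec{a} = (a_1,\dots,a_s) \in \N^s$, I want to compress it into a single natural number in a way that is reversible and that turns the ``oriented embedding'' products $F(\cdot,\cdot)$ into ordinary binomial coefficients. The natural device is a base-$B$ expansion: fix $B$ larger than any coordinate that can appear (here $B = n+1$ suffices, since every $|\alpha_t|_1,|\beta_t|_1 \le n$), and map $\vec{a} \mapsto \sum_{j=1}^s a_j B^{j-1}$. The point is that $F(\vec{a},\vec{i}) = \prod_j \binom{a_j}{i_j}$, and for a fixed small ``test tuple'' $\vec{i}$ with $|\vec{i}|_\infty \le \ell$, knowing $\sum_t F(\alpha_t,\vec{i})$ for all such $\vec{i}$ is the same as knowing, for the multiset of encoded integers, the ``generalized moments'' $\sum_t \prod_j \binom{(\alpha_t)_j}{i_j}$. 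So I first argue that the hypothesis ``$\sum_t F(\alpha_t,\vec i) = \sum_t F(\beta_t,\vec i)$ for all $\vec i \in \{0,\dots,\ell\}^s$'' lets me recover a large chunk of the ordinary power-sum moments $\sum_t e(\alpha_t)^r$ (where $e$ is the base-$B$ encoding) for $r$ up to roughly $\ell$ — converting products of binomials into a single binomial $\binom{e(\vec a)}{m}$, and then (Vandermonde / Newton's identities) into power sums — possibly losing only a bounded factor in the range of exponents.

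Next, with power sums in hand: the key lemma of \cite{deg} (their Lemma~11) says that two multisets of naturals, each of total size $\le N$, that agree on their first $m$ power sums must be equal if $m \gtrsim \sqrt{N}\log N$. Here the encoded integers live in $[0, s B^{s-1}] \subseteq [0, s(n+1)^{s-1}]$ and there are $m \le 2n$ of them, but crucially their \emph{sum} is controlled: $\sum_t e(\alpha_t) \le \sum_t |\alpha_t|_1 \cdot B^{s-1} \le n (n+1)^{s-1}$, i.e.\ $\le n' := O_s(n^s)$. Applying the power-sum uniqueness result with this bound on the sum gives agreement of the two encoded multisets once $\ell$ (hence the number of available power sums, up to the bounded loss from the binomial$\to$power-sum conversion) exceeds $c_s \sqrt{n'}\log n' = c_s \cdot O_s(n^{s/2}\log n)$. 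That is far worse than the claimed $(1+o_s(1))\sqrt n \log n$ — so the naive base-$B$ packing is \emph{not} good enough, and this is exactly where the ``stronger assumptions'' in the statement must be exploited.

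The hard part, and the step I expect to be the main obstacle, is getting the \emph{right} bound $\gamma_s(n) \le (1+o_s(1))\sqrt n\log n$ rather than something of order $n^{s/2}$. The way to fix the loss is to not collapse all $s$ coordinates into one integer at once, but to peel them off one at a time: treat the tuples as elements of $\N^{s-1}$ with a ``marked'' last coordinate, run an induction on $s$. Concretely, fix a test value $i_s$ for the last coordinate; the identity $\sum_t \binom{(\alpha_t)_s}{i_s}\prod_{j<s}\binom{(\alpha_t)_j}{i_j} = \sum_t \binom{(\beta_t)_s}{i_s}\prod_{j<s}\binom{(\beta_t)_j}{i_j}$ says that the two ``$\N^{s-1}$-indexed weighted multisets'' (weight $= \binom{(\cdot)_s}{i_s}$, attached to the projection onto the first $s-1$ coordinates) have the same $\ell$-moments in the sense of the $(s-1)$-dimensional problem. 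One then argues: the total $\ell_1$-weight is still $\le n$ (binomial coefficients here can be large, so one must be careful — possibly work with the substitution that replaces $a \mapsto \binom{a}{i}$ by something linear, or restrict to $i_s \in \{0,1\}$ so weights are just the coordinates themselves and sums stay $\le n$). Using $i_s=0$ recovers the multiset of first-$(s-1)$-coordinate projections with multiplicity, and using $i_s = 1$ recovers the same projections weighted by the $s$-th coordinate; combining across a few more values of $i_s$ and inducting, one recovers the full multiset $M$ coordinate by coordinate, and at each stage the governing quantity stays linear in $n$, so each application of the univariate power-sum bound only costs $(1+o(1))\sqrt n\log n$. Making this induction airtight — in particular checking that ``$\alpha,\beta$ not related by a permutation'' is inherited at some stage of the peeling, and that the $\ell$-budget is not consumed $s$ times over but essentially once — is the delicate bookkeeping that constitutes the real content of the proof, and is presumably what the author means by ``an elementary number theoretic proof'' replacing the analytic machinery of \cite[Lemma~11]{deg}.
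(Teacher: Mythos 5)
Your proposal does not reach a proof: the part you yourself flag as ``the real content'' is exactly what is missing. The first reduction (base-$B$ encoding of tuples into single integers, then the univariate power-sum/Lemma~11-style bound) only yields $\ell = O_s(n^{s/2}\log n)$, which is not even sublinear for $s\ge 2$, so it cannot serve as the backbone of the argument; you correctly observe this. The proposed repair --- peeling off the last coordinate and inducting on $s$ with weights $\binom{(\cdot)_s}{i_s}$ --- is left as a sketch precisely at the points where it is unclear it can work: (i) knowing that the weighted projections onto the first $s-1$ coordinates have equal moments does not obviously reconstruct the fibers of the last coordinate, and matching projections does not hand you the inductive hypothesis ``not related by a permutation'' at a useful stage (the sequences may have identical projections and differ only in the last coordinate); (ii) the weighted version of the statement is no longer the lemma being proved by induction, since the weights can be huge and the controlling quantity $\sum_t|\cdot|_1\le n$ is not preserved (restricting to $i_s\in\{0,1\}$ gives too little information to pin down the fibers); and (iii) it is not shown that the $\ell$-budget is spent once rather than $s$ times. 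Without these, the claim $\gamma_s(n)\le(1+o_s(1))\sqrt n\log n$ --- or even sublinearity --- is not established.

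For comparison, the paper avoids any reduction to one dimension. It forms the signed difference function $a=a_{\alpha,\beta}:\N^s\to\Z$, notes that the functions $F(\cdot,\vec i)$ with $\vec i\in\{0,\dots,\ell\}^s$ span all polynomials of degree at most $\ell$ in each variable (so the binomial-moment equalities give vanishing of all mixed power sums up to degree $\ell$ in each variable), and then uses Fermat's little theorem plus inclusion--exclusion to conclude $\eta_{\vec j,p}(a)\equiv 0\pmod p$ for every residue class $\vec j$ and every prime $p\le\ell$. The genuinely number-theoretic step (Lemma~\ref{num}) then works directly with the vectors of $\supp(a)$ modulo primes in $[\sqrt n\log\log n,\sqrt n\log n]$: a minimal-$|\cdot|_\infty$ element $\vec x_*$ either has $a(\vec x_*)$ divisible by about $\sqrt n$ of these primes (forcing $|a(\vec x_*)|$ to be superpolynomially large) or has, for about $\sqrt n$ primes $p$, a distinct congruent partner in $\supp(a)$ with large norm, forcing $\sum_{\vec x\in\supp(a)}|\vec x|_1=\omega(n)$. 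Because the congruence argument is carried out coordinatewise on $\N^s$, there is no dimension-dependent loss, which is how the paper gets $(1+o_s(1))\sqrt n\log n$ uniformly in $s$. If you want to salvage your outline, you would need to supply an argument of comparable strength for the peeling step; as written, the proposal identifies the obstacle but does not overcome it.
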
Lemma~\ref{alg} immediately implies the following corollary, which is the only result from Section~\ref{mom} that will be used in future sections. We will use this to reconstruct $S$-decks (of functions associated with caterpillars) from their moments.
\begin{cor}
Take $\gamma := \max\{\gamma_1,\gamma_2,\gamma_3\}$, where $\gamma_1,\gamma_2,\gamma_3$ are the functions given by Lemma~\ref{alg}. We have that $\gamma:\N\to\N$ is sublinear (i.e., $\gamma(n) = o(n)$).
\end{cor}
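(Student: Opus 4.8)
The plan is to unwind the definitions: the corollary is an immediate consequence of Lemma~\ref{alg} together with the elementary fact that a pointwise maximum of finitely many sublinear functions is sublinear.

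First I would invoke Lemma~\ref{alg} three times, once for each $s \in \{1,2,3\}$, to obtain functions $\gamma_1,\gamma_2,\gamma_3 : \N \to \N$ with $\gamma_s(n) \le (1+o_s(1))\sqrt n \log n$. Then, with $\gamma(n) := \max\{\gamma_1(n),\gamma_2(n),\gamma_3(n)\}$, fix $\ep > 0$ and choose $N$ large enough that $\gamma_s(n) \le (1+\ep)\sqrt n\log n$ holds for all $n \ge N$ and \emph{all three} values of $s$ simultaneously; this is possible since there are only finitely many indices to worry about. For $n \ge N$ we then get $\gamma(n) \le (1+\ep)\sqrt n \log n$, and since $\sqrt n \log n = o(n)$ and $\ep$ was arbitrary, this yields $\gamma(n) = o(n)$. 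That $\gamma$ is $\N$-valued is clear since each $\gamma_s$ is.

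There is no genuine obstacle here; all of the substance lies in Lemma~\ref{alg}. The only point deserving a word of care is that the error terms $o_s(1)$ are allowed to depend on $s$, but this causes no difficulty precisely because we take the maximum over a fixed finite index set $\{1,2,3\}$ rather than over an unbounded range of $s$, so a single threshold $N$ suffices for all three.
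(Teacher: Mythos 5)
Your proof is correct and matches the paper's (essentially implicit) argument: the corollary is stated as an immediate consequence of Lemma~\ref{alg}, relying precisely on the fact that a maximum of the three sublinear functions $\gamma_1,\gamma_2,\gamma_3$ is sublinear. Your extra care about the $s$-dependence of the $o_s(1)$ terms over the finite index set is sound and fills in the only detail the paper leaves unstated.
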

\vspace{1.5mm}

To prove Lemma~\ref{alg}, we need a technical lemma that requires some additional definitions.

\vs

For two $\vec{i},\vec{j} \in \N^s$, we say that $\vec{i} \equiv_p \vec{j} $ if for all $k \in [s]$ we have $i_k \equiv j_k \mod{p}$. For a finitely supported function $a: \N^s\to \Bbb{Z}$, and $\vec{j} \in \N^s$, we define $\eta_{\vec{j},p}(a) = \sum_{\vec{x} \equiv_p \vec{j}} a(\vec{x})$.

\vspace{1mm}

\begin{lem} \label{num} Fix any $s$ and any absolute constant $C$. Suppose that $a:\N^s \to \Bbb{Z}$ is such that $|a|_1 + \sum_{\vec{x} \in \supp(a)} |\vec{x}|_1 \le Cn$. Suppose also that for every $\vec{j} \in \N^s$, and all primes $p \le (1+o(1))\sqrt{n }\log(n)$, we have $\eta_{\vec{j},p}(a) \equiv 0 \mod p$. Then $a$ is identically zero.
\end{lem}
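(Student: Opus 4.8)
The plan is to argue by contradiction: suppose $a$ is not identically zero, and derive that one of the supposed divisibility relations must fail. The key observation is that the quantities $\eta_{\vec{j},p}(a)$ are "bucketed sums" of $a$ along residue classes modulo $p$, and the hypothesis says each bucket is divisible by $p$. I would first reduce to a one-dimensional problem. Encode each $\vec{x} = (x_1,\dots,x_s) \in \supp(a)$ as a single integer via a mixed-radix (base-$B$) expansion $\Phi(\vec{x}) = \sum_{k=1}^s x_k B^{k-1}$, where $B$ is chosen larger than all coordinates appearing in $\supp(a)$ (note $B \le Cn+1$ suffices, since each $|x_k| \le |\vec{x}|_1 \le Cn$). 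For a prime $p$ coprime to $B$, the map $\vec{x}\mapsto \Phi(\vec{x}) \bmod p$ has the property that $\vec{x}\equiv_p\vec{y}$ iff... — actually this is false in general, so instead I would use a prime $p$ and observe that $\vec i \equiv_p \vec j$ partitions $\supp(a)$ into at most $p^s$ classes; the cleaner route is to note that knowing $\eta_{\vec j,p}(a)\equiv 0$ for \emph{all} $\vec j$ is equivalent to the statement that the polynomial $P_a(z_1,\dots,z_s) := \sum_{\vec x} a(\vec x) z_1^{x_1}\cdots z_s^{x_s} \in \mathbb Z[z_1,\dots,z_s]$ vanishes modulo $p$ when each $z_k$ is specialized to a primitive $p$-th root of unity in $\overline{\mathbb F_p}$ — i.e. $P_a \equiv 0 \pmod{p, z_1^p-1,\dots,z_s^p-1}$.

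With this reformulation, the argument becomes: the total degree of $P_a$ in each variable is at most $\max_{\vec x\in\supp(a)}|\vec x|_1 \le Cn$, and the number of nonzero coefficients, together with the sum of their absolute values, is at most $|a|_1 \le Cn$. If $a\not\equiv 0$, fix any $\vec x_0 \in \supp(a)$ with $a(\vec x_0)\ne 0$ and $0 < |a(\vec x_0)| \le |a|_1 \le Cn$. For a prime $p$ that does not divide $a(\vec x_0)$ and that exceeds $\max_{\vec x \in \supp(a)}|\vec x|_\infty$ (so that no two distinct support points are congruent mod $p$ in any coordinate, hence each residue bucket $\vec j$ contains at most one support point), we get $\eta_{\vec x_0, p}(a) = a(\vec x_0) \not\equiv 0 \pmod p$ — contradiction. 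So it suffices that there \emph{exists} a prime $p \le (1+o(1))\sqrt n\log n$ with $p > Cn \ge \max|\vec x|_\infty$ — but that is impossible for large $n$, so the "distinct residues" trick alone is too weak and the small-prime regime must be exploited more carefully.

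Here is the fix, and the step I expect to be the real obstacle. Rather than forcing all support points into distinct buckets at once, I would iterate over many small primes and use a counting/pigeonhole argument in the spirit of \cite[Theorem~7]{deg}. Order $\supp(a) = \{\vec x^{(1)},\dots,\vec x^{(N)}\}$ with $N \le |a|_1 \le Cn$. For the divisibility to fail, it is enough to find a prime $p$ and a residue class $\vec j$ such that $\sum_{\vec x^{(t)} \equiv_p \vec j} a(\vec x^{(t)}) \not\equiv 0 \pmod p$. Suppose for contradiction this never happens for any prime $p \le T := (1+o(1))\sqrt n\log n$. Consider a fixed support point $\vec x^{(1)}$ with $a(\vec x^{(1)}) \ne 0$: for each prime $p\le T$, the bucket containing $\vec x^{(1)}$ has sum $\equiv 0 \pmod p$, so some other support point $\vec x^{(t)}$ ($t\ne 1$) must satisfy $\vec x^{(t)}\equiv_p \vec x^{(1)}$, i.e. $p \mid (x^{(t)}_k - x^{(1)}_k)$ for all $k$, \emph{unless} the $a$-values conspire to cancel modulo $p$ among points already congruent to $\vec x^{(1)}$. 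The quantity $\prod_{k}\prod_{t\ne 1}\big|$nonzero differences $x^{(t)}_k - x^{(1)}_k\big|$ is a nonzero integer of size at most $(Cn)^{sN} \le (Cn)^{sCn}$, so it has at most $O_s(n\log n / \log n) = O_s(n)$... more precisely $\log$ of it is $O_s(n\log n)$, hence it has at most $O_s(n\log n/\log\log n)$ prime factors — I need the count of "bad" primes forced by each support point to be $o(\pi(T))$ and then sum over all $N \le Cn$ support points and all $s$ coordinates; choosing $T = (1+o(1))\sqrt n\log n$ so that $\pi(T) \sim T/\log T \sim \sqrt n$ while the total number of prime obstructions is $O_s(n\log n/\log\log n) / \big(\text{something}\big)$ — the bookkeeping must be arranged (as in \cite[Theorem~7]{deg}) so that the number of primes that could possibly be "used up" by cancellations is strictly smaller than $\pi(T)$, leaving a good prime. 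Making this pigeonhole quantitatively tight — i.e. extracting exactly the $(1+o_s(1))\sqrt n\log n$ bound rather than something larger — by carefully balancing $|a|_1$-type bounds against the number and size of coordinate differences is the main technical obstacle; everything else is the polynomial/root-of-unity reformulation plus Chebyshev-type prime counting.
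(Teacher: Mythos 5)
Your final route starts in the right place---fixing a support point $\vec{x}^{(1)}$ and observing that for each prime $p$ either $p\mid a(\vec{x}^{(1)})$ (or a cancellation among congruent points occurs) or some other support point is congruent to $\vec{x}^{(1)}$ modulo $p$ in every coordinate---but the step you yourself flag as ``the main technical obstacle'' is precisely where the argument is missing, and the bookkeeping you sketch cannot be repaired in that form. You aim to find a single prime $p\le T=(1+o(1))\sqrt{n}\log n$ avoiding all congruence obstructions, bounding the number of bad primes by counting prime factors of coordinate differences. But each of up to $N\le Cn$ other support points can obstruct $\Theta(\log n/\log\log n)$ primes, so your bound on the obstructed set is $\Theta_s(n\log n/\log\log n)$, while $\pi(T)\sim 2\sqrt{n}$: the pigeonhole goes the wrong way by a polynomial factor, and in general there need not exist \emph{any} prime $p\le T$ for which the residue bucket of $\vec{x}^{(1)}$ is a singleton with $p\nmid a(\vec{x}^{(1)})$. (The mixed-radix encoding and the root-of-unity restatement in your first two paragraphs are abandoned or purely cosmetic, so they do not bridge this gap.)

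The paper closes the argument with a charging scheme rather than a search for one good prime. It restricts to the set $P$ of primes in the window $[\sqrt{n}\log\log n,\sqrt{n}\log n]$, of size $(2-o(1))\sqrt{n}$, and chooses $\vec{x}_*\in\supp(a)$ of minimal $\ell_\infty$-norm. If two primes of $P$ relate $\vec{x}_*$ to the \emph{same} partner, then some coordinate difference is at least $n(\log\log n)^2$ and the weight bound $Cn$ is already violated; otherwise distinct primes yield distinct partners. Now every $p\in P$ is charged: if $p\mid a(\vec{x}_*)$, it contributes a factor at least $\sqrt{n}\log\log n$ to $|a(\vec{x}_*)|\le |a|_1$; if $p\nmid a(\vec{x}_*)$, then $p$-goodness forces a partner $\vec{y}\equiv_p\vec{x}_*$ with $\vec{y}\ne\vec{x}_*$, and the minimality of $|\vec{x}_*|_\infty$ together with $p\ge\sqrt{n}\log\log n$ forces $|\vec{y}|_\infty\ge\sqrt{n}\log\log n$, which is charged to $\sum_{\vec{x}\in\supp(a)}|\vec{x}|_1$. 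Pigeonholing over the $(2-o(1))\sqrt{n}$ primes, one of the two charges totals $\omega(n)$ (either $|a(\vec{x}_*)|\ge(\sqrt{n}\log\log n)^{(1-o(1))\sqrt{n}}$ or a sum of at least $(1-o(1))n\log\log n$), contradicting $|a|_1+\sum_{\vec{x}\in\supp(a)}|\vec{x}|_1\le Cn$. This idea---allowing every prime to be ``bad'' but making each bad prime expensive in one of the two norms---is the ingredient your proposal does not supply, so as written the proof has a genuine gap.
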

\noindent This is our main number-theoretic result. Since moments and polynomials are closely connected, we will be able to reduce Lemma~\ref{alg} to Lemma~\ref{num} by the following argument, which is adapted from work in a previous paper by Scott \cite[Proof of Lemma~1]{scott} (in their notation $\eta_{\cdot,\cdot}(\cdot)$ was denoted as $n_{\cdot,\cdot}(\cdot)$).

\vs

\begin{proof}[Proof of Lemma~\ref*{alg}, assuming Lemma~\ref*{num}]

Consider $\alpha,\beta \in (\N^s)^m$ and $\ell$ such that
\[\sum_{t=1}^m F(\alpha_t,\vec{i})=\sum_{t=1}^m F(\beta_t,\vec{i})\] for all $\vec{i}\in\{0,\dots,\ell\}^s$. Moreover take $n$ so that $\sum_{t=1}^m|\alpha_t|_1,\sum_{t=1}^m|\beta_t|_1 \le n$.

\vs

Without loss of generality, we may assume $\alpha,\beta$ are disjoint, as removing like terms will not cause our assumptions about $\ell,n$ to become invalid. In particular, we shall assume $0^s\neq \alpha_t$ for any $t\in [m]$. This implies that $m \le \sum_{t=1}^m |\alpha_t|_1 \le n$ (hence $m$ is bounded in terms of $n$).

\vs

Let $a = a_{\alpha,\beta}:\N^s \to \Bbb{Z}; \vec{x} \mapsto |\{t: \alpha_t = \vec{x}\}| - |\{t:\beta_t = \vec{x}\}|$. We have that $|a|_1 + \sum_{\vec{x} \in \supp(a)} |\vec{x}|_1 \le  2m+ \sum_{t=1}^m |\alpha_t|_1+|\beta_t|_1\le 4 n$. Clearly, $\alpha,\beta$ are related to each other by a permutation if and only if $a$ is indentically zero.

\vs

Now, for each $i \in \N$, we have that $p_i:\N\to \N;x\mapsto \binom{x}{i}$ is an $i$-degree polynomial. It is clear that $p_0,\dots,p_\ell$ form a basis over 1-variable polynomials of degree at most $\ell$. Now, for $\vec{x},\vec{i} \in \N^s$, we have that $F(\vec{x},\vec{i}) = \prod_{k \in [s]} p_{i_k}(x_k)$. Through induction, we get that $\{F(\cdot,\vec{i}): \vec{i} \in \{0,\dots,\ell\}^s\}$ spans all $s$-variable polynomials where the degree in each variable is at most $\ell$ (basically we can fix the first variable, apply the inductive hypothesis on the other $s-1$-variables, then fix everything but the first variable and apply the 1-variable case).

\vs

Now, assume that $\delta_{\vec{i}} := \sum_{t=1}^m F(\alpha_t,\vec{i}) - F(\beta_t,\vec{i})=0$ for all $\vec{i} \in \{0,\dots,\ell\}^s$. Without loss of generality, we may assume that $\alpha,\beta$ are disjoint, as like terms will cancel out. In particular, we shall assume $0^s \neq \alpha_t$ for any $t \in [m]$. This implies that $m \le \sum_{t=1}^m |\alpha_t|_1 \le n$ (hence $m$ is bounded in terms of $n$).

\vs

Let $a = a_{\alpha,\beta}:\N^s \to \Bbb{Z}; \vec{x} \mapsto |\{t: \alpha_t = \vec{x}\}| - |\{t:\beta_t = \vec{x}\}|$. We have that $|a|_1 + \sum_{\vec{x} \in \supp(a)} |\vec{x}|_1 \le  2m+ \sum_{t=1}^m |\alpha_t|_1+|\beta_t|_1\le 4 n$. We also must have that $\sum_{\vec{x} \in \N^s} a(\vec{x}) \prod_{k \in [s]} x_k^{i_k} = 0$ for all $\vec{i} \in \{0,\dots,\ell\}^s$, as it is a linear combination of $\{\delta_{\vec{j}}:\vec{j} \in \{0,\dots,\ell\}^s\}$ (this follows from our fourth paragraph). 

\vs

For any prime $p$ and integer $N$, we have that $N^{p-1} \equiv 1 \mod{p}$ unless $N \equiv 0 \mod{p}$. Exploiting this fact, we can deduce that the moments of $a$ being zero for all $\vec{i} \in \{0,\dots,\ell\}^s$ implies that for all primes $p \le \ell$ and all $\vec{j} \in \N^s$, we have that $\eta_{\vec{j},p}(a) \equiv 0 \mod p$. We defer the details to Appendix~\ref{app}.

\vs

By Lemma~\ref{num}, it follows that either $\ell < (1+o(1))\sqrt{n}\log(n)$ or $a$ is identically zero, the latter of which would be impossible as we assume $\alpha,\beta$ are not related by a permutation.
\end{proof}

We will now prove Lemma~\ref{num}. We remark that our argument can actually prove stronger results than what is stated in Lemma~\ref{num}, we discuss the details in Remark~\ref{fullstrength}.

\begin{proof}[Proof of Lemma~\ref*{num}]
Let $k = n^{1/2}$, and let $P$ denote the set of primes in the interval $[k\log\log n,k\log n]$. By the Prime Number Theorem, we have that $|P| = (2-o(1))k$.

\vs

For a prime $p$, we say that $a$ is $p$-good if $\eta_{\vec{j},p}(a) \equiv 0 \mod{p}$ for all $\vec{j} \in \N^s$. We suppose that $a:\N^s\to \Bbb{Z}$ is $p$-good for all primes up to $k\log(n)$.

\vs

We shall proceed to prove the contrapositive, assuming $\Supp(a) \neq \{\}$ and deducing that \[|a|_1 + \sum_{\vec{x} \in \supp(a)} |\vec{x}|_1 = \omega(n).\]

\vs

First, we suppose there are distinct $\vec{x},\vec{y} \in \supp(a)$ and distinct $p_1,p_2 \in P$ such that $\vec{x} \equiv_{p_1}\vec{y}$ and $\vec{x} \equiv_{p_2}\vec{y}$. Since $\vec{x},\vec{y}$ are distinct, there is $i\in [s]$ such that $x_i-y_i \neq 0$. Furthermore by assumption we'll have that $p_1p_2 \mid x_i-y_i$. Since $p_1,p_2\in P \subset [k\log\log n,k\log n]$, we get that \[n(\log\log n)^2 \le |x_i-y_i| \le |\vec{x}|_1 +|\vec{y}|_1 \le |a|_1 + \sum_{\vec{x} \in \supp(a)} |\vec{x}|_1.\]Thus, this scenario implies that $|a|_1 + \sum_{\vec{x} \in \supp(a)} |\vec{x}|_1 = \omega(n)$, as desired. We now proceed, assuming that for distinct $p_1,p_2\in P$, $\vec{x}\in \supp(a)$ and $\vec{y}_1,\vec{y}_2\in \supp(a)\setminus \{\vec{x}\}$ that
\[\vec{x}\equiv_{p_1}\vec{y}_1, \vec{x}\equiv_{p_2} \vec{y}_2\implies \vec{y}_1\neq \vec{y}_2.\tag{$*$}\label{disjoint}\]

\vs

Since $\supp(a)$ is non-empty, we may choose $\vec{x}_* \in \supp(a)$ such that $|\vec{x}_*|_\infty$ is minimal (such a choice might not be unique). We observe
\begin{align*}
    |a|_1 &\ge |a(\vec{x}_*)| \tag{$\dagger$}\label{valbound}\\
    \sum_{\vec{x}\in \supp(a)}|\vec{x}|_1 &\ge \sum_{p\in P}\sum_{\substack{\vec{y}\in \supp(a)\setminus \{\vec{x}_*\}\\:\vec{y} \equiv_p \vec{x}_*}} |\vec{y}|_\infty  \tag{$\ddagger$}\label{corbound}.
\end{align*}(Eq.~\ref{valbound} is immediate, while Eq.~\ref{corbound} uses \ref{disjoint} to deduce that no $\vec{y}$ is ``double-counted''.) We shall show that the RHS of either Eq.~\ref{valbound} or Eq.~\ref{corbound} will be $\omega(n)$ (which allows us to deduce the sum of their LHS's is $\omega(n)$, as desired). 

\vs 

To this end, we define $P_*= \{p\in P: p\mid |a(\vec{x}_*)|\} $. We shall prove
\begin{align}
    |a(\vec{x}_*)| &\ge (k\log\log n)^{|P_*|}\label{valbnd}\\
    \sum_{p\in P}\sum_{\substack{\vec{y}\in \supp(a)\setminus \{\vec{x}_*\}\\:\vec{y} \equiv_p \vec{x}_*}} |\vec{y}|_\infty &\ge |P\setminus P_*|(k\log\log n).\label{corbnd}
\end{align}
Recalling from earlier that $|P| = (2-o(1))k$, we may apply pigeonhole principle to see $\max\{|P_*|,|P\setminus P_*|\}\ge (1-o(1))k$. So, if $|P^*| \ge (1-o(1))k$, then Eq.~\ref{valbnd} implies that the RHS of Eq.~\ref{valbound} is\footnote{In fact, here the RHS will be exponentially large. We elaborate upon the ``full strength'' of our proof in Remark~\ref{fullstrength}.} $\omega(n)$. Similarly, if $|P\setminus P^*| \ge (1-o(1))k$, then Eq.~\ref{corbnd} shows that the RHS of Eq.~\ref{corbound} is at least $(1-o(1))n\log\log n = \omega(n)$. In either case we are done, thus we are left to justify Equations~\ref{valbnd} and ~\ref{corbnd}.

\vs

Deriving Eq.~\ref{valbnd} is straight-forward. Since $\vec{x}\in \supp(a)$, $|a(\vec{x}_*)|$ is a positive integer. Hence, as $P_*$ is a set of primes which divide $|a(\vec{x}_*)|$, we get\footnote{In the equation below, we make a minor abuse of notation. If $P^*$ is the empty set, we consider $\min\{p \in P^*\}^{|P^*|}$ to equal $1$.}\[|a(\vec{x}_*)|\ge \prod_{p\in P_*}p \ge \min\{p\in P_*\}^{|P_*|}\ge  (k\log\log n)^{|P_*|}.\](For the last inequality, we recall $P_*\subset P\subset [k\log\log n,k\log n]$.) Hence, we have obtained Eq.~\ref{valbnd}. 

\vs

To verify Eq.~\ref{corbnd}, we first define $P' = P\setminus P_*$. For each $p\in P'$, we claim there exists some $\vec{y} \in \supp(a) \setminus \{\vec{x}_*\}$ such that $\vec{y} \equiv_p \vec{x}_*$. Indeed, since $p\in P'$, $p\nmid |a(\vec{x}_*)|$ thus if there was no such $\vec{y}$, we'd have that
\[\eta_{\vec{x}_*,p}(a) = a(\vec{x}_*) \not \equiv 0\mod{p},\]contradicting the assumption that $a$ is $p$-good. Hence, there must exist some such $\vec{y}\in \supp(a)\setminus \{\vec{x}_*\}$. This allows us to define a function $\phi:P'\to \supp(a)\setminus \{\vec{x}_*\}$ such that $\vec{x}_* \equiv_p \phi(p)$ for each $p \in P'$.

\vs

It follows that
\[\sum_{p\in P}\sum_{\substack{\vec{y}\in \supp(a)\setminus \{\vec{x}_*\}\\:\vec{y} \equiv_p \vec{x}_*}} |\vec{y}|_\infty \ge \sum_{p \in P'} |\phi(p)|_\infty.\]
To obtain Eq.~\ref{corbnd}, we shall show that each summand on the RHS above is at least $k\log\log n$. This will follow from our choice of $\vec{x}_*$. 

\vs

There are two cases. If $|\vec{x}_*|_\infty \ge k\log\log n$, then recalling $\vec{x}_*$ minimizes the infinity norm, we have $|\vec{y}|_\infty \ge k\log\log n$ for all $\vec{y} \in \supp(a)$ (meaning we are done). Otherwise, $|\vec{x}_*|_\infty < k\log\log n$. Suppose for sake of contradiction there was $p\in P'$ such that $|\phi(p)|_\infty<k\log\log n$. 

\vs

It should then follow that $|\phi(p)-\vec{x}_*|_\infty <k\log\log n\le p$, because both vectors do not have negative entries and hence are in $[0,k\log\log n-1]^s$. Furthermore, since $\vec{x}_* \equiv_p \phi(p)$, we should have $ \phi(p)-\vec{x}_* \equiv_p 0^s$. By our last two sentences, we should get that  $\phi(p)-\vec{x}=0^s$. However, this contradicts $\phi(p) \in \supp(a)\setminus \{\vec{x}_*\}$. And so, we must have that $|\phi(p)|_\infty \ge k\log\log n$ as desired.

\hide{
Let $k = n^{1/2}$, and let $P$ denote the set of primes in the interval $[k\log(\log(n)),k\log(n)]$. For a prime $p$, we say that $a$ is $p$-good if $\eta_{\vec{j},p}(a) \equiv 0 \mod{p}$ for all $\vec{j} \in \N^s$. We suppose that $a:\N^s\to \Bbb{Z}$ is $p$-good for all primes up to $k\log(n)$.

\vs

We shall proceed to prove the contrapositive, assuming $\Supp(a) \neq \{\}$ and deducing that $|a|_1 + \sum_{\vec{x} \in \supp(a)} |\vec{x}|_1 = \omega(n)$.

\vs

We note two crude lower bounds $|a|_1 \ge |a|_\infty, \sum_{\vec{x} \in \Supp(a)} |\vec{x}|_1 \ge \sup_{\vec{x} \in \supp(a)}\{|\vec{x}|_\infty\}$. Observing that if $N>0$ is divisible by two primes in $P$, then $N \ge n\log(\log(n))^2 =\omega(n)$, these two crude lower bounds allow us to make two useful assumptions. First, for any $\vec{x} \in \Supp(a)$, we may assume that $|a(\vec{x})|$ is divisible by at most one $p \in P$. We may also assume that for distinct $\vec{x},\vec{y} \in \supp(a)$, there is at most one $p \in P$ such that $\vec{x} \equiv \vec{y} \mod p$.\footnote{Note that $\vec{x} \equiv \vec{y} \mod p \implies p \mid |\vec{x}-\vec{y}|_\infty$. The claim then follows from triangle inequality, as $2 \max\{|\vec{x}|_\infty,|\vec{y}|_\infty\} \ge |\vec{x}|_\infty+ |\vec{y}|_\infty\ge |\vec{x}-\vec{y}|_\infty$.}

\vs

Now, let $p$ be any prime where $a$ is $p$-good. It follows that for any $\vec{x} \in \Supp(a)$, we must either have that $p\mid a(\vec{x})$, or that $\eta_{\vec{x},p}(a) = \sum_{\vec{y} \equiv \vec{x} \mod p}a(\vec{y})$ has multiple non-zero summands. 

\vs

Thus for any $\vec{x} \in \supp(a)$, we have that there exists an injection $\phi_{\vec{x}}:P\to \supp(a)$ such that $\vec{x} \equiv \phi_{\vec{x}}(p) \mod p$ for each $p \in P$ (there is at most one $p \in P$ such that $p \mid a(\vec{x})$, in which case we can let $\phi(p) = \vec{x}$). Let $Y_{\vec{x}} = \im(\phi_{\vec{x}})\setminus \{\vec{x}\}$. 

\vs

Now, $|P| = (2-o(1))k$, thus as $\phi_{\vec{x}}$ is an injection, $|Y_{\vec{x}}| \ge |P|-1 = (2-o(1))k$. If $|\vec{x}|_\infty \le k\log(\log(n))$, then $|\vec{y}|_{\infty} \ge k\log(\log(n))$ for $\vec{y} \in Y_{\vec{x}}$. So, choosing $\vec{x}$ such that $|\vec{x}|_\infty$ is minimized, we have that $\min\{|\vec{y}|_\infty:\vec{y}\in Y_{\vec{x}}\} \ge k \log(\log(n))$. We then get that 
\[\sum_{\vec{x} \in \N^s} |a(\vec{x})|(|\vec{x}|_1+1) \ge \sum_{\vec{y} \in Y_{\vec{x}}} |\vec{y}|_1 \ge (2-o(1))k^2\log(\log(n)) = \omega(n).\]

\hide{So, for any $x \in \Supp(a)$, we shall write $P_x$ for the subset of $P$ left after removing $p$ such that $p \mid a(x)$.

$i\in \N^s$ such that $a_i \neq 0$, then either $|a_i| = \omega(n)$, or for almost all primes $p$ in $[k\log(\log(n)),k\log(n)]$, $p\nmid a_i$, and so there is $i'$ with $|i-i'|_1 \ge p$ with $a_{i'} \neq 0$ (with $i'$ being distinct for each $p$). Now, either there is $i$ within $[k\log(\log(n))]^s$, in which $|i'|_1 \ge p+|i|_1\ge k\log(\log(n))$ for each $p$, or there are no $i$ in $[k\log(\log(n))]^s$, in which case $|i'|_1 \ge k$ for all $i'$. 

\vs

So, we either have that $a_i = \omega(n)$ or there are $(1-o(1))k$ values $i' \in \supp(a)$ such that $|i'|_1 \ge k\log(\log(n))$. In either case, this implies $\sum_{i \in \N^s} |a_i| |i|_1= \omega(n)$.}} \end{proof}

\begin{rmk}\label{fullstrength}Our proof of Lemma~\ref{num} can easily give a stronger result. In particular, it would still work if we only required that the RHS of Eq.~\ref{valbound} had at most $k$ divisors in $P$ (because conditioned on this not happening we have that $|P'| \ge (1-o(1))k$). Thus, we can show that either $|a|_\infty > (k\log\log n)^k$ or $\sum_{\vec{x} \in \supp(a)}|\vec{x}|_\infty \ge (1-o(1))n\log\log n$. This will not be necessary for our purposes, however.
\end{rmk}

\section{Reconstructing Caterpillars} \label{reconstructing caterpillars}

We associate each $k+1$ diameter caterpillar $G$ with a function $f:[k] \to \N$, where $f(i) = \deg(v_{i+1})-2$ where $v_1,\dots, v_{k+2}$ is some arbitrarily determined maximum path in $G$. This defines an injection $\phi:G\mapsto f$, where $G\neq H \implies \phi(G) \not \sim \phi(H)$. Given a function $f:[k] \to \N$, we note that it is trivial to reconstruct $\phi^{-1}(f)$.

\vs

In this section, we shall ultimately show that 
\begin{lem}\label{cat2func} Suppose we know $G$ is a caterpillar graph with diameter $k+1$ and $n$ vertices. Let $\ell \ge w + 3\gamma(\rho(n-k))\hide{(3+o_{n-k}(1))\sqrt{n-k}\log(n-k)}$(where $\rho(x) = c_1x +c_2$ is an affine linear function). Given $\mathcal{D}_\ell(G)$, and the degree sequence of $G$, we can reconstruct $D_S(\phi(G))$ for all $S$ where $\Delta(S) \le w$, and $|S| \le 3$.
\end{lem}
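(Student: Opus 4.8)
The goal is to recover, for every set $S$ with $|S|\le 3$ and $\Delta(S)\le w$, the full $S$-deck $D_S(\phi(G))$ from the $\ell$-deck of $G$ together with the degree sequence. Write $f=\phi(G):[k]\to\N$, so $f(i)=\deg(v_{i+1})-2$ along a fixed maximum path $v_1,\dots,v_{k+2}$, and note $|f|_1\le n-k$ roughly (the $k$ path-internal vertices each contribute $2$ to the degree sum, and the two ends are leaves). The strategy decomposes exactly as substeps (2') and (2'') advertised in Subsection~\ref{summary}: first, reconstruct finitely many \emph{moments} $d_{\ell_0}(D_S(f))$ from $\mathcal{D}_{\Delta(S)+\ell_0}(G)$ by a graph-counting argument, and second, invoke Lemma~\ref{alg} (via the corollary defining $\gamma$) to conclude that $o(n-k)$ moments already pin down the $S$-deck itself.

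\textbf{Step 1: moments of $D_S(f)$ from subgraph counts.} Fix $S$ with $s:=|S|\le 3$ and $w_S:=\Delta(S)\le w$, and fix a target tuple $\vec j\in\N^s$ with $|\vec j|_1=\ell_0$. I want to express $\#(\vec j\in d(D_S(f)))=\sum_{f|_T} F(f|_T,\vec j)$ — the sum over all translated/reflected copies $T$ of $S$ inside $[k]$ of $\prod_{m}\binom{f(T_m)}{j_m}$ — as a count of certain subgraphs in $G$. The natural subgraph to count is: a path $P$ on $w_S+2$ vertices (which will land on $v_{\min T}, \dots, v_{\max T+? }$ inside the spine), together with, at the $m$-th marked spine position, a bunch of $j_m$ extra pendant leaves attached. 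Choosing a path of length $w_S+1$ forces it (for a caterpillar of diameter $k+1\gg w_S$) to sit along the spine in essentially one of two orientations, which is exactly what produces the $f\sim f'$ symmetrization built into $D_S$; the $\binom{f(T_m)}{j_m}$ factors count the ways to choose $j_m$ of the available pendant leaves at that spine vertex. One has to be careful that a counted subgraph could also arise with a path-endpoint playing the role of a "pendant", or with the path bending at a high-degree vertex, but these anomalies involve only $O(w_S)=O(1)$ boundary vertices and can be corrected for using the degree sequence (which tells us the multiset of $f$-values) and Kelly's lemma (Remark~\ref{kelly}), which lets us compute $n_H(G)$ for any fixed $H$ on $\le\ell$ vertices. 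The total vertex count of the subgraph being counted is $(w_S+2)+\sum_m j_m = w_S+2+\ell_0 \le w+2+\ell_0$, so all these counts are available from $\mathcal{D}_{\ell}(G)$ as soon as $\ell\ge w+\ell_0+O(1)$. Summing/inverting a finite (size $O(1)$) linear system over the choices of $T$-shape and orientation recovers $d_{\ell_0}(D_S(f))$, i.e. all of $D_S(f)$ restricted to tuples of weight $\le\ell_0$.

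\textbf{Step 2: few moments suffice.} Now set $M=D_S(f)$. By the remark recorded in Subsection~\ref{summary}, $M$ is a multiset of tuples in $\N^s$ with $|M|\le 2(n-k)$ and $\sum_{\vec x\in M}|\vec x|_1\le 2s(n-k)$, so (after the affine rescaling $\rho(x)=c_1x+c_2$ absorbing the constants $2$ and $2s\le 6$) $M$ is "$(\rho(n-k),s)$-bounded" in the sense Lemma~\ref{alg} needs: any two such multisets that agree on all moments $\vec i\in\{0,\dots,\ell_0\}^s$ with $\ell_0\ge\gamma_s(\rho(n-k))$ must be equal as multisets. Hence it suffices to run Step 1 with $\ell_0=\gamma_s(\rho(n-k))$ for $s=1,2,3$, and taking $\gamma=\max\{\gamma_1,\gamma_2,\gamma_3\}$ we need $\ell\ge w+\gamma(\rho(n-k))+O(1)$; the stated hypothesis $\ell\ge w+3\gamma(\rho(n-k))$ is comfortably enough (the factor $3$ gives slack for the $O(1)$ corrections and for the three values of $s$). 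Applying this for every $S$ with $\Delta(S)\le w$, $|S|\le 3$ — there are only $O(w^2)=O(1)$ such $S$ up to the translation/reflection equivalence — completes the reconstruction of all the required $S$-decks.

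\textbf{Main obstacle.} The delicate point is Step 1: setting up the subgraph-to-moment correspondence so that the "wrong" embeddings (paths that bend at a high-degree spine vertex, or that use a spine vertex's pendant leaf as an endpoint, or copies of $S$ that run off the end of $[k]$) are either provably nonexistent for $G$ of large diameter relative to $w$, or are exactly computable from the degree sequence plus $n_H(G)$ values and thus subtractable. Getting the bookkeeping of this finite linear system right — and verifying it is invertible so the moments are genuinely determined, not just constrained — is where the real work lies; everything downstream is a direct appeal to Lemma~\ref{alg}.
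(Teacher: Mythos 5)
Your overall architecture (recover finitely many moments of the $S$-deck from subgraph counts, then invoke Lemma~\ref{alg} to see that $o(n-k)$ moments determine the deck) matches the paper's, and your Step 2 is essentially Proposition~\ref{moments2slices}. But Step 1 has a genuine gap, and it is exactly at the point you flag as the ``main obstacle.'' The subgraph-count identity only works for \emph{non-degenerate} tuples $\vec j$, i.e.\ those with $j_1,j_s\ge 1$ (after the paper's shift $T$, which re-interprets card entries as local leaf counts so that an induced copy of a suitable caterpillar $H^*$ is forced to lay its spine along a window of the spine of $G$, giving the exact identity $\#(g\in d(T(D_S(h))))=n_{H^*}(G)(1+1_E)$ with no correction terms; this is Proposition~\ref{nondegen}). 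For degenerate tuples (say $j_1=0$) the moment is a \emph{truncated, position-dependent} sum: already for $S=\{1,w\}$ and $\vec j=(0,j)$ it equals $\sum_{m\ge w}\binom{f(m)}{j}+\sum_{m\le k-w+1}\binom{f(m)}{j}$, which records how far the high-degree spine vertices sit from the ends of the spine. This information is not a correction ``involving only $O(w_S)=O(1)$ boundary vertices'' — note $w_S$ can be as large as roughly $k/2$, hence linear in $n$ — and it cannot be extracted from the degree sequence together with Kelly-type counts $n_H(G)$ of graphs on $\le w+\ell_0$ vertices: a subgraph that certifies ``this vertex is at distance $\ge w-1$ from both spine ends'' needs diameter about $2w$, which blows the deck budget precisely in the regime $w\approx k/2$ that the theorem needs. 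So the asserted ``finite invertible linear system'' correcting the anomalies is not just unproven bookkeeping; as described it would not close.

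The paper resolves this with a mechanism absent from your proposal: degenerate moments are never obtained from subgraph counts. Instead, one works with the auxiliary function $h=\aux(f)$ (sentinel values $-1$ at positions $0,k+1$, which let the deck ``see'' the spine ends) and proves (Proposition~\ref{degen}) that the degenerate part $d(T(D_S(h)))_{degen}$ is computable from the \emph{full} decks $T(D_U(h))$ for narrower sets $U\prec S$. Consequently the proof of Lemma~\ref{cat2func} is an induction on the width $\Delta(S)$: at each width one assembles the non-degenerate moments via Kelly's lemma, the degenerate moments from the previously reconstructed narrower decks, and then applies Lemma~\ref{alg} (Proposition~\ref{moments2slices}) \emph{at that width} to pass from moments to the full deck before moving up — not a single application of Lemma~\ref{alg} at the end, as in your plan, because the degenerate step needs full narrower decks, not just their moments. (Base case $\Delta(S)=1$ comes from the degree sequence, which is also why the lemma's hypotheses include it.) To repair your write-up you would need to restrict your counting claim to non-degenerate tuples and add this width induction, or supply a genuinely new way to capture the positional information carried by the degenerate moments.
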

\begin{rmk}
Inspecting our proofs, one may confirm that we can in fact reconstruct the desired decks $D_S(\phi(G))$ by looking at $\mathcal{D}_\ell(G)$ restricted to the cards with diameter at most $w+1$ (along with the rest of the information assumed to be provided).
\end{rmk}

\vs

And in Section~\ref{binary strings}, we prove
\begin{lem}\label{func deck}Given two functions $f,g:[k]\to \N$, either $f \sim g$, or there is some $S$ with $|S|\le 3$ and width at most $\lfloor k/2-1\rfloor  +1$ such that $D_S(f) \neq D_S(g)$.
\end{lem}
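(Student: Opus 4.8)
The plan is to reconstruct $f$ up to reversal from the prescribed $S$-decks by an inductive ``peeling from both ends'' argument. One can first reduce to binary strings via the level sets $f^{(t)}$, $f^{(t)}(i) = I(f(i)\ge t)$: thresholding commutes with reversal and with passing to a sub-tuple, so $D_S(f)=D_S(g)$ for all admissible $S$ forces $D_S(f^{(t)})=D_S(g^{(t)})$ for all $t$; the only mild subtlety is then to choose the reversals consistently across $t$, which one verifies using the un-thresholded pair-decks. Since the argument below does not really use binariness, I will describe it for a general alphabet.

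First, unpack the hypotheses. From $D_{\{1\}}$ we know the multiset of values of $f$. From $D_{\{1,1+d\}}$ with $1\le d\le W-1$, where $W$ is the maximal allowed width, we know the reflection-symmetrized $2$-point data: for every unordered pair of values $\{a,b\}$, the number of $i$ with $\{f(i),f(i+d)\}=\{a,b\}$. From the three-element shapes $\{1,1+d_1,1+d_1+d_2\}$ and $\{1,2,2+d\}$ we know the reflection-symmetrized $3$-point data for all triples of positions of total width $\le W$. Note that because $|S|\le 3$ we do \emph{not} obtain the multiset of long contiguous substrings, only of short ones; the reconstruction must be assembled entirely from these sparse, bounded-range, low-order, symmetrized correlations. (This is reminiscent of recovering a signal from its triple correlation, which classically determines the signal up to translation and reflection --- with the twist that here the correlation range is capped at $\approx k/2$.)

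The engine is the inequality $2W\ge k$: the windows $[1,W]$ and $[k-W+1,k]$ cover $[k]$, so the reversal-invariant unordered pair $\bigl\{\,f|_{[1,W]},\,(f|_{[k-W+1,k]})'\,\bigr\}$ already determines $f$ up to reversal. I would build this pair by induction: maintain the reversal-invariant unordered pair $P_j=\bigl\{\,f|_{[1,j]},\,(f|_{[k-j+1,k]})'\,\bigr\}$ and show the decks let one pass from $P_j$ to $P_{j+1}$. The step amounts to recovering the two end-symbols $f(j+1)$ and $f(k-j)$ together with which belongs to which end. Since only three positions may be inspected simultaneously, one must pin these down by correlating the not-yet-known positions $j+1$ and $k-j$ against positions already recovered in $[1,j]\cup[k-j+1,k]$, via the $2$- and $3$-point counts; the reflection-symmetrization is exactly what forbids extending the two ends in opposite orientations, and the $3$-point counts of width up to $W$ --- which can reach across the middle of $[k]$ --- are what eliminate the residual ambiguity and make the delicate $k$-even case work (there $[1,W]$ and $[k-W+1,k]$ are disjoint and only these cross-correlations tie them together).

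I expect the extension step to be the main obstacle: showing that reflection-symmetrized $\le 3$-point correlation data of range only $\approx k/2$ determines, inductively, the next symbol at each end. Two features make it tight: (i) only up to third-order statistics are available, so the ``turnpike''-type non-uniqueness that already defeats purely second-order reconstruction must be killed off using the third-order data; and (ii) the range is only about half of $k$ --- best possible by the caterpillar examples of Section~\ref{intro}, so there is no slack, and the argument must manufacture global information out of the overlap $2W\ge k$. Small $k$ would be dispatched separately, since the width bound degenerates there.
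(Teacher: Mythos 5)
Your high-level plan (recover the two end-windows up to a single global reversal, exploiting that the allowed width is about $k/2$, with $2$- and $3$-point data tying the two ends together and the middle handled separately) is in the same spirit as the paper, but the step you yourself flag as "the main obstacle" is precisely the missing idea, and it does not come for free. The decks $D_S$ are aggregated over \emph{all} translates of $S$ and its reflection, so they do not give you correlations between specific positions; your inductive step presumes you can "correlate the not-yet-known positions $j+1$ and $k-j$ against positions already recovered", but no mechanism is given for extracting any position-anchored information from translation-invariant counts. The paper's proof supplies exactly this device (Lemma~\ref{get xS}): for a set $S$ lying within reach of an end, compare the deck of $\{0\}\cup S$ with that of $\{1\}\cup S$ (or with $D_S$ itself when $\min(S)=1$), restricted to the coordinates of $S$; the difference isolates the unique translate flush with the boundary and hence yields the anchored, reflection-symmetrized data $\{x|_S,\,x'|_S\}$. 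This is where the $3$-element sets are really spent: one element serves as the boundary anchor and the other two give anchored pair data $\{x(i)|x(j),\,x'(i)|x'(j)\}$ near either end (Corollaries~\ref{2 decks} and \ref{3 decks}). With that in hand the paper does not peel symbol by symbol at all: it recovers $x(i)+x'(i)$ for all end positions at once, fixes one antisymmetric position to break the reflection, and reads off the rest from anchored pairs. Without an anchoring device of this kind your induction cannot start, so the proposal has a genuine gap rather than a routine verification left to do.

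Two smaller points. First, your coverage claim $2W\ge k$ with $W=\lfloor k/2-1\rfloor+1=\lfloor k/2\rfloor$ fails for odd $k$ (the middle position is uncovered), and even for even $k$ the anchored range does not quite reach the two central positions; these middle entries are not a "small $k$" degeneracy but occur for every $k$, and the paper needs separate arguments for them (counting $|\Supp(x)|$, and for even $k$ one pair-deck of width exactly $\lfloor k/2-1\rfloor+1$ reaching across the middle — this is also where the full width allowance is genuinely used). Second, your threshold reduction to binary strings has the orientation-consistency issue across thresholds that you only gesture at; the paper avoids it by arguing contrapositively and, when each single-value indicator string is reconstructible but with conflicting orientations, passing to the $\F_2$-sum of two indicator projections, which produces a binary pair that is provably not equivalent under reversal. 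Since you propose to run the argument over a general alphabet anyway this is minor, but as stated it is another unverified step.
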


\vs

Assuming these two lemmas, we can prove our main result.

\begin{proof}[Proof of Theorem~\ref{main}, assuming Lemma~\ref{cat2func} and Lemma~\ref{func deck}]

By looking at the $1$-deck of $G$, we can construct its number of vertices, $n$.

\vs

Let $\ell_1(n)$ be such that we can recognize whether any graph $G$ on $n$ vertices is a caterpillar graph by looking at its $\ell_1(n)$-deck. Let $\ell_2(n)$ be such that we can reconstruct the degree sequence of any graph $G$ with $n$ vertices by looking at its $\ell_2(n)$-deck; as we will note below if we know the degree sequence of $G$ and that $G$ is a caterpillar, we can reconstruct diameter of $G$.\hide{Let $\ell_2(n)$ be such that we can reconstruct the diameter any of caterpillar graph $G$ with $n$ vertices by looking at its $\ell_2(n)$-deck.} We let $\ell_3(n) = \max_{k \in [n-2]} \{k/2+1+ 3\gamma(\rho(n-k))\}$.

\vs

If $G$ is a caterpillar graph with $n\ge 3$ vertices, then its diameter is $k+2$ for some $k \in [n-2]$. Hence, by Lemma~\ref{cat2func} and Lemma~\ref{func deck}, taking $\ell(n) = \max\{1,\ell_1(n),\ell_2(n),\ell_3(n)\}$ we can reconstruct $\phi(G)$, and hence $G$, by looking at $\mathcal{D}_{\ell(n)}(G)$. We make a few observations to show that $\ell(n) = n/2+O(1)$, giving our main result.

\vs

We have that $G$ is a caterpillar if and only if it is a tree and does not contain a certain 7 vertex subgraph ($K_{1,3}$ with all edges subdivided). In a recent paper by Kostochka, Nahvi, West, and Zirlin, it was proved that we can recognize whether a graph is a tree by using the $(\lfloor n/2 \rfloor +1)$-deck \cite{tree}. Meanwhile, by Kelly's lemma (Remark~\ref{kelly}), if $\ell \ge V(H)$, then we construct $n_H(G)$, the number of subgraphs of $H$ in $G$, by looking at $\mathcal{D}_\ell(G)$. Hence, we see that $\ell_1(n) = \max\{7,\lfloor n/2\rfloor +1\} = n/2 + O(1)$.

\vs

Meanwhile, by Remark~\ref{dseq}, we can reconstruct the degree sequence of any $n$-vertex graph using $O(\sqrt{n\log(n)})$ vertices. So, we get that $\ell_2(n) = O(\sqrt{n\log(n)})= n/2 +O(1)$. Furthermore, when $G$ is a caterpillar, its diameter is $n-|\{v : \deg(v) = 1\}| +2$, so we can reconstruct this quantity by looking at the $\max\{\ell_1(n),\ell_2(n)\}$-deck.

\vs

Lastly, we note that $k/2+3\gamma(\rho(n-k))+2-n/2 = 3\gamma(\rho(n-k))+2-(n-k)/2$. Since $\gamma$ is sublinear, and $\rho$ is affine, we have that there is some $C$ such that $3\gamma(\rho(n-k))+2-(n-k)/2 < C$ for all values $n-k \in \N$. And so $\ell_3(n) \le n/2+C$.
\end{proof}

\vs

\subsection{Some details and exposition}

We will now go over an outline of our proofs.

\hide{We associate a $k$ diameter caterpillar $G$ with a function $f:[k] \to [0,n]$, where $f(i) = \deg(v_{i+1})-2$ where $v_1,\dots, v_{k}$ is some arbitrarily determined maximum path in $G$. This defines a injection $\phi:G\mapsto f$. Given a function $f:[k] \to [-1,\infty)$, we note that it is trivial to reconstruct $\phi^{-1}(f)$.}

\vs

\hide{For a function $f:[k] \to [2,\infty)$, and a set $S\subset [k]$, we let $D_S(f)$ be the multiset of functions $g:[|S|] \to [2,\infty)$ where the multiplicity of $g$ is $\sum_{i \in \Bbb{Z}} I(g = f_S)+I(g = f'_S)$.}

As was proved in Lemma~\ref{alg}, to reconstruct a multiset $M$, it suffices to take enough moments (i.e. calculate $d_\ell(M)$ for large enough $\ell$). Thus, a reasonable idea would be to try and calculate the moments of the deck $D_S(\phi(G))$ for each $S$ mentioned in Lemma~\ref{cat2func}. We will essentially be doing this, except we first apply an injective transformation to our multiset, and then take moments.

\vs

We define a translation $T$, which will act on each set $\Z^s$ (it will be explained in a few paragraphs why we will be working over $\Z^s$ rather than $\N^s$). For a function $g \in \Z^s$, let $T(g)$ be the function obtained by incrementing the values of $g(1)$ and $g(s)$ by 1 (here if $s =1$, then $T(g)(1) = g(1)+2$). For a multiset of functions, $M$, we let $T(M)$ be multiset of $T(g)$ for $g \in M$. It is clear that $T$ is an injective map from multisets to multisets.

\vs

The motivation behind $T$ is this. Suppose $G$ is caterpillar, with $v_0,v_1,\dots,v_k,v_{k+1}$ being its identified path. We have that $T(\phi(G))(i)$ counts the number of leaves of $v_i$. This leaf counting function will be much more natural to consider when working with subgraph counts, due to counting identities which will be shown later (cf. Eq.~\ref{basicidentity}).

\vs

We call a function $g \in \N^s$ \textit{degenerate} if $g(1) = 0$ and/or $g(s) = 0$, and otherwise call $g$ non-degenerate. As we shall promptly explain, calculating the moments of degenerate $g$ versus non-degenerate $g$ are rather different tasks, and will be proven in two parts. Because of this casework, for a multiset of functions, $M$, we let $M_{degen}$ be the multiset which only contains $g\in M$ which are degenerate, and define $M_{non-degen}$ to be $M \setminus M_{degen}$.

\vs

When $G,H$ are caterpillar graphs with the same diameter, we have that $T(\phi(H))$ is non-degenerate, and \[F(T(\phi(G)),T(\phi(H)))+F(T(\phi(G)),T(\phi(H)'))= n_H(G)(1 + 1_E) \tag{$\S$}\label{basicidentity}\] where $E$ is the event that $\phi(H)$ is symmetric. In Proposition~\ref{nondegen}, we will show that whenever $g$ is non-degenerate, then the (multiplicity $g$ in $d(T(D_S(\phi(G))))$/moment of $g$ in $T(D_S(\phi(G)))$) will similarly count $n_H(G)$ for some caterpillar $H$, which can be calculated by looking at $\mathcal{D}_{|V(H)|}(G)$.

\vs 

Meanwhile, when $g$ is degenerate, the moment of $g$ in $T(D_S(\phi(G)))$ cannot be as nicely expressed with subgraph counts. Here, the moment roughly counts the number of times a certain $H$ appears as a subgraph in $G$, with certain leaves of $H$ lying in the interior of $G$. Having leaves be in the interior of $G$ is a rather unwieldy boundary condition; to circumvent this, we actually work with a slightly different function than $\phi(G)$.

\vs 

Given a function $f:[k] \to \N$, we define the auxillary function $h = \aux(f)$ so that $h(0) = -1 = h(k+1)$, and $h|_{[k]} = f$. In Proposition~\ref{degen}, we show that for degenerate $g$ that we can calculate its moment in $T(D_S(\aux(\phi(G))))$ through induction. Meanwhile, for non-degenerate $g$, the moments will not change (essentially\hide{ since in each deck of the auxillary function, we only get two new cards, which both have a zero at one of their ends, meanwhile $g$ being non-degenerate has a value of at least 1 at each of its ends} $\aux(\phi(G))(0) = 0< g(1)$, and so when you compute $F$ for additional cards in the deck, we will have a factor of $\binom{0}{g(1)} = 0$). Thus everything nicely works out here.

\vs

\subsection{Proofs}

We will now use the $\ell$-deck of $G$ to reconstruct some information about $\phi(G)$. For the following sections, let $G$ be a caterpillar graph, $f = \phi(G)$, $h = \aux(f)$

\begin{prp} \label{nondegen} For $S\subset [k],|S| \ge 2, \ell_0 \ge 0$. Suppose $\ell \ge \ell_0+\Delta(S)$. Given $\mathcal{D}_{\ell}(G)$, we can reconstruct $d_{\ell_0}(T(D_S(h)))_{non-degen}$.
\begin{proof} WLOG, assume $\min(S) = 1$.

\vs

Consider some $g:[|S|] \to \N$ where $|g|_1 \le \ell_0$, where $\supp(g) \supset \{1,|S|\}$. We shall calculate the multiplicity of $g$ in $d(T(D_S(h)))$. Iterating over all such $g$, we will have reconstructed $d_{\ell_0}(T(D_S(h)))_{non-degen}$.

\vs

As $g$ is non-degenerate, there should be a caterpillar $H_g$ where $T(\phi(H_g))\sim g$. When we subdivide edges of the inner path of a caterpillar $H$, we do not change the fact that $H$ is a caterpillar, and introduce a new vertex with zero leaves. So, by subdividing $H_g$, we can get a caterpillar $H^*$ so that $T(\phi(H^*))|_S \sim g$ and $\supp(T(\phi(H^*))) \subset S$.  

\vs

As alluded to earlier, we claim that the multiplicity of $g$ in $T(D_S(h))$ will be
\[n_{H^*}(G)(1+1_E)\]
where $E$ is the event that $\phi(H^*) = \phi(H^*)'$. By Kelly's lemma (Remark~\ref{kelly}), we have that $n_{H^*}(G)$ can be reconstructed by $|V(H^*)|$-deck. Now, $H^*$ has $|g|_1 \le \ell_0$ leaves, and $\Delta(S)$ non-leaf vertices, thus $|V(H^*)| \le \ell_0+\Delta(S)\le \ell$ meaning we can reconstruct the multiplicity as desired.

\vs

We shall now verify the proposed formula for the multiplicity of $g$. Now, we have that
\begin{align*}\#(g\in d(T(D_S(h)))) &= \sum_{i \in \Z} F(T(h|_{i+S}),g) +  \sum_{i \in \Z} F(T(h'|_{i+S}),g)\\
&= \sum_{i \in \Z} F(T(f|_{i+S}),g) +  \sum_{i \in \Z} F(T(f'|_{i+S}),g),\\\end{align*}
where in the last line, we use the fact that $T(D_S(h)) \setminus T(D_S(f)) = \{ T(h|_S), T(h'|_S)\}$ and $T(h|_S)(1) = 0 =T(h'|_S)(1)$, thus $F(T(h|_S),g) = 0 = F(T(h'|_S),g)$ as $g(1) > 0$.

\vs

Informally, we shall show that each summand corresponds to the number of copies of $H^*$ in $G$ where we specify that $\inner(H^*)$ embeds into $\inner(G)$ according to a specific map. The first sum handles all embeddings that are oriented in one direction, while the second sum handles all embeddings oriented in the opposition direction. Meanwhile, the appearance of $(1+1_E)$ is to handle the double counting which occurs when $H^*$ is symmetric, in which case direction of the embedding won't change the set of copies that can appear.

Let $v_1,\dots v_k$ be the inner path in $G$ and $u_1,\dots,u_{\Delta(S)}$ be the inner path of $H^*$ (meaning that $\deg(v_i) -2= \phi(G)(i) $, $\deg(u_i)-2 = \phi(H^*)(i)$). WLOG, we shall assume $\phi(H^*) = g$, otherwise as $\phi(H^*) \sim g$ we'd have $\phi(G) = g'$ in which case things follow in the same fashion.

We claim that for $i \in \Z$, \begin{align*}
    F(T(h|_{i+S}),g) &= |\{ U \subset V(G) :\textrm{there is an isomorphism } \psi: V(G[U]) \to V(H^*)\\ &\quad\quad \textrm{ s.t. }\psi(v_{i+j}) = u_j \textrm{ for each $j \in [\Delta(S)]$}\}|,\\
    F(T(h'|_{i+S}),g) &= |\{ U \subset V(G) :\textrm{there is an isomorphism } \psi: V(G[U]) \to V(H^*)\\ &\quad\quad \textrm{ s.t. }\psi(v_{k-(i-1+j)}) = u_j \textrm{ for each $j \in [\Delta(S)]$}\}|.\\
\end{align*}The above claim is straight-forward to verify. It suffices to consider the first claimed equality, as the second will follow by reversing $v_1,\dots,v_k$. We note that for $j\in [\Delta(S)]$\begin{align*}
    T(h|_{i+S})(j) &=\deg(v_{i+j})-|\{j-1,j+1\}\cap [\Delta(S)]| \\&= \deg(v_{i+j}) - |\{j'\in [\Delta(S)]: v_{i+j'}\textrm{ is adjacent to }v_{i+j}\}|\\
\end{align*}counts the ``leaves'' of $v_{i+j}$ in the subgraph $X$ of $G$ induced by $v_{i+1},\dots,v_{i+\Delta(S)}$ and their neighbors. Meanwhile, the number of leaves of $u_j$ in $H^*$ is $g(t)$ if $j$ is the $t$-th smallest element of $S$, and zero otherwise. Hence, as the LHS of the above counts the number of choices to embed the leaves of $H^*$ into the leaves of $X$, it equals  \begin{align*}
    \prod_{j=1}^{\Delta(S)} \binom{\textrm{number of leaves of }v_{i+j} \textrm{ in }X}{\textrm{number of leaves of }u_{j} \textrm{ in }H^*} 
    &= \prod_{j \in S} \binom{\textrm{number of leaves of }v_{i+j} \textrm{ in }X}{\textrm{number of leaves of }u_{j} \textrm{ in }H^*}\\ 
    &= F(T(H|_{i+S}),g)\\
\end{align*} (in the first equality we removed all the $j \not \in S$ since for such terms the bottom argument of the binomial will be zero whilst the top will be non-negative, thus such terms will equal one). Thus our claimed equality holds as desired.

With the claim established, we are almost done. It suffices to show that every copy of $H^*$ in $G$ must be of the two above forms, we leave this as an exercise to the reader.

\hide{Let $\Psi$ be the set of injective homomorphisms $\psi:V(H^*) \to V(G)$, we have that $|\Psi| = n_{H^*}(G) |\Aut(H^*)|$. Now, the number of automorphisms of $H^*$ will be $(1+1_E)\prod_{v \in \Int(H^*)} \leaves(v)! = (1+1_E)\prod_{i \in [|S|]}g(i)!$, where the first term accounts for automorphisms of the interior, and the second term is for automorphisms of leaves.

\vs
Next, let $\Psi_0$ be the set of restrictions of $\psi \in \Psi$ to the interior of $H^*$. We get
\begin{align*}
(1+1_E)n_{H^*}(G) &= \frac{\#(\psi \in \Psi)}{\#(\textrm{automorphisms on the leaves of }H^*)}\\
&= \sum_{\psi_0 \in \Psi_0} \#(\im(\psi): \psi \in \Psi \textrm{ such that } \psi|_{\Int(H^*)} = \psi_0).
\end{align*}

This follows from the fact that for any $t \in \N$ and $a,b\in \N^t$, if $B = \supp(b)$, then \[F(a,b) = F(a|_B,b|_B)\]and so for any multiset $M$ of $\N^t$, we have
\begin{align*}
    \#(b\in d(M)) &= \sum_{c \in M} F(c,b) \\
                &= \sum_{c \in M} F(c|_B,b|_B)\\
                &= \#(b|_B \in d(M|_B)),\\
\end{align*}
where $M|_B$ is the multiset of functions with length $|B|$ so that the multiplicity of $w$ is $\sum_{c \in M} I(c|_B = w)$\edit{already wrote this definition right after Lemma 5.2, maybe streamline notation?}. So to confirm our first observation, one just checks that
\begin{itemize}
    \item $T(\phi(H^*))\in \N^{\Delta(S)}$ (which should be true as it counts the numbers of leaves in $H^*$, which can't be negative)
    \item that $T(D_{[\Delta(S)]}(h))$ is a multiset of $\N^{\Delta(S)}$ 
    \item that $T(\phi(H^*))|_S = g$ and $\supp(T(\phi(H^*))) = S$ (which is true by our choice of $H^*$)
    \item and that $T(D_S(h)) = T(D_{[\Delta(S)]}(h))|_S$.
\end{itemize}For the last statement, we just use the fact that for multisets $N,M$, if $N = M|_S$, then $T(N) = T(M)|_S$; and for sets $A \subset B$ with the same width, and a function $q\in \N^n$, that $D_A(q) = D_B(q)|_A$. Note that for any set $X\subset [n]$, that the total multiplicity of $D_X(q) = 2(n-\Delta(X)+1)$, thus for $A,B$ with different width this argument fails.

\vs

We now go over the actual counting argument. With the work above, we have reduced the claim to justifying\[n_{H^*}(G)(1+1_E) = \#(T(\phi(H^*)) \in d(T(D_{[\Delta(S)]}(h)))).\]
Now, we have that the LHS counts the number of subsets $V'\subset V(G)$ such that $H^*$ is isomorphic to the induced subgraph $G[V']$. We let $\Psi$ denote the set of injective homomorphisms $\psi:V(H^*)\to V(G)$, so that $\psi$ is an isomorphism from $H^*$ to $G[\im(\psi)]$.

\vs 

Let $u_0,\dots, u_{\Delta(S)+1}$ be the maximum path in $H^*$, and $v_0,\dots,v_{k+1}$ be the maximum path in $G$ (by \textit{the} maximum path, we mean the arbitrarily determined maximum path that is used in the definition of $\phi$). For $\psi \in \Psi$ we have that there is $i \in [k], \epsilon \in\{-1,1\}$ such that $\psi(u_j) = v_{i + \epsilon (j-1)}$ for $j \in [|S|]$}

\hide{Let $g^*:[\min(S),\max(S)] \to \N$ be the function where $g^*|_S = g$. Translate $g^*$ so that its domain starts with 1.

\vs

Let $H =H_g = \phi^{-1}(T^{-1}(g^*))$. It is easily confirmed that $H$ has $\Delta(S) + |g|_1 \le \ell_0$ vertices, thus $n_H(G)$ can be reconstructed from $\mathcal{D}_{\ell_0}(G)$ by Remark~\ref{kelly}. 

\vs

We then observe that the multiplicity of $g$ in $d(T(D_S(h)))$ will be $2n_H(G)$ if $g^* = (g^*)'$, or $n_H(G)$ otherwise. To see this, note that for every copy of $H$ in $G$, we will have that the interior of $H$ is contained in the interior of $G$. The rest is straightforward combinatorics.}
\end{proof}
\end{prp}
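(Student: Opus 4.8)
The plan is, for each non-degenerate target $g:[|S|]\to\N$ with $|g|_1\le\ell_0$, to exhibit a caterpillar $H_g$ — explicitly determined by $g$ and $S$ — with $|V(H_g)|\le\ell$, such that the multiplicity of $g$ in $d(T(D_S(h)))$ equals $(1+1_E)\,n_{H_g}(G)$, where $E$ is the event (known, since $g$ is known) that $\phi(H_g)$ is a palindrome; Kelly's lemma (Remark~\ref{kelly}) then recovers $n_{H_g}(G)$ from $\mathcal{D}_\ell(G)$, and ranging over the finitely many such $g$ reconstructs $d_{\ell_0}(T(D_S(h)))_{non-degen}$. Normalizing $\min(S)=1$ (so $\max(S)=\Delta(S)$ and $1,\Delta(S)\in S$), I would build $H_g$ with interior path $u_1,\dots,u_{\Delta(S)}$ by attaching to $u_j$, when $j$ is the $t$-th element of $S$, exactly enough leaves that the total number of leaves at $u_j$ (counting the two path-endpoint leaves that $u_1$ and $u_{\Delta(S)}$ necessarily carry) equals $g(t)$, and no further leaves when $j\notin S$. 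This is possible precisely because $g$ is non-degenerate, it makes $T(\phi(H_g))|_S=g$ with $\supp(T(\phi(H_g)))\subseteq S$, and it gives $|V(H_g)|=\Delta(S)+|g|_1\le\Delta(S)+\ell_0\le\ell$.

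First I would pass from $h=\aux(f)$ to $f=\phi(G)$. Expanding the moment,
\[
\#(g\in d(T(D_S(h))))=\sum_{i\in\Z}F(T(h|_{i+S}),g)+\sum_{i\in\Z}F(T(h'|_{i+S}),g),
\]
and the only terms $h|_{i+S}$ or $h'|_{i+S}$ occurring here but not already in $D_S(f)$ are the finitely many that reach the two boundary slots where $\aux$ inserted a $-1$; after applying $T$, each such term has value $0$ in position $1$ or in position $|S|$, and since $g$ is non-degenerate the corresponding $F(\cdot,g)$ acquires a factor $\binom{0}{g(1)}=0$ or $\binom{0}{g(|S|)}=0$. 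Hence the moment of $g$ is unchanged if $h$ is replaced by $f$.

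The heart of the argument is the counting identity: writing $v_1,\dots,v_k$ for the interior path of $G$ and $u_1,\dots,u_{\Delta(S)}$ for that of $H_g$, I would show
\[
F(T(f|_{i+S}),g)=\#\{\,U\subseteq V(G):\ \exists\ \text{iso }\psi\colon G[U]\to H_g\ \text{with }\psi(v_{i+j})=u_j\ \forall j\in[\Delta(S)]\,\},
\]
with $\sum_i F(T(f'|_{i+S}),g)$ accounting for the embeddings in the opposite orientation. The mechanism: unwinding $T$, the value $T(f|_{i+S})(j)=\deg(v_{i+j})-|\{j-1,j+1\}\cap[\Delta(S)]|$ is exactly the number of leaves of $v_{i+j}$ inside the subgraph of $G$ spanned by $v_{i+1},\dots,v_{i+\Delta(S)}$ and their neighbours, while $u_j$ has $g(t)$ leaves in $H_g$ when $j=S_t$ and none otherwise; so $F(T(f|_{i+S}),g)$ is a product of binomials counting, for each $j$, which leaves at $v_{i+j}$ are chosen to play the role of the leaves at $u_j$, i.e.\ exactly the number of such pinned $U$. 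Summing over $i$ and over both orientations then counts every induced copy of $H_g$ in $G$: a leaf of $G$ stays a leaf in any induced subgraph, so the interior of such a copy is a contiguous monotone segment $v_{i+1},\dots,v_{i+\Delta(S)}$ of the interior path of $G$, and the isomorphism restricted to it is pinned down once an orientation is fixed; both orientations occur exactly when $\phi(H_g)=\phi(H_g)'$, which yields the factor $(1+1_E)$.

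The step I expect to be the main obstacle is this last matching — indeed the surrounding text leaves it ``as an exercise'' — namely verifying rigorously that every induced copy of $H_g$ in $G$ arises from exactly one window $i$ together with precisely the compatible orientation(s), with the binomial factors recording the leaf choices bijectively. The delicate points are confirming that the interior path of the induced sub-caterpillar must align as a contiguous monotone block of $v_1,\dots,v_k$ (ruling out spurious misaligned readings when many $u_j$ carry no extra leaves) and handling the palindromic case without double- or under-counting; everything else, including the reduction via $\aux$ and the appeals to Kelly's lemma, is routine.
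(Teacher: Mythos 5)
Your proposal is correct and follows essentially the same route as the paper's proof: build the auxiliary caterpillar $H^*$ (your direct construction with interior path $u_1,\dots,u_{\Delta(S)}$ and leaves placed at positions in $S$ is the same object the paper obtains by subdividing, a purely cosmetic difference), kill the boundary cards of $\aux$ via $\binom{0}{g(1)}=\binom{0}{g(|S|)}=0$, establish the pinned-embedding identity for $F(T(f|_{i+S}),g)$, and sum over windows and orientations with the $(1+1_E)$ correction before invoking Kelly's lemma. Your sketch of the remaining matching step (leaves of $G$ stay leaves in induced subgraphs, so the interior of any copy is a monotone contiguous block of $v_1,\dots,v_k$) is exactly the content the paper leaves as an exercise, and it is sound.
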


\begin{prp}\label{degen} For $S \subset [k]$, $|S| \ge 2$. Given $D_{U}(h)$ for every $U \prec S$, we can reconstruct $d(T(D_S(h)))_{degen}$.
\begin{proof} Consider some $g:[|S|] \to \N$ where $\{1,|S|\} \setminus \supp(g) \neq \{\}$. WLOG, let's assume $1 \not \in \supp(g)$, the other case will follow in the same fashion. Let $E = [2,|S|]$ (in the other case, we would take $E = [|S|-1]$). 

\vs

We remark that for $C \in T(D_S(h))$, that $C(1) \ge 0$, thus $F(C,g) = F(C|_E,g|_E)$. Hence, writing $M = \{ C|_E: C \in T(D_S(h))\}$, we have that the multiplicity of $g$ in $d(T(D_S(h)))$ is equal to the multiplicity of $g|_E$ in $d(M)$. 

\vs

We will now reconstruct $M$, after which we easily get our result. Let $U = (S \setminus \{\min(S)\})\cup \{\min(S)+1\}$. It is clear that $\Delta(U) = \Delta(S)-1,|U| \le |S|$ and so $U \prec S$. Thus by assumption we know $T(D_U(h))$. We then get that for each card $c$, \begin{align*}\#(c \in M) &= \sum_{C \in T(D_S(h))} I(C|_E = c)\\
&=\sum_{i \in \Z}I(i+S \subset \dom(h)) (I(T(h|_{i+S})|_E = c) +I(T(h'|_{i+S})|_E = c))\\
&=\sum_{i \in \Z}I(i+S \subset \dom(h)) (I(T(h|_{i+U})|_E = c) +I(T(h'|_{i+U})|_E = c))\\
&= \sum_{C \in T(D_U(h))} I(C(1) > 0)I(C|_E = c)\end{align*}which can be reconstructed as we know $T(D_U(h))$.\footnote{In the fourth equality, we make use of the fact that $I(i+S \subset \dom(h)) \neq I(i+U \subset \dom(h)) $ occurs if and only if $i = -\min(S)$, which in turn is equivalent to $T(h|_{i+U})(1) = 0 \iff T(h'|_{i+U})(1) = 0$.}
\end{proof}
\end{prp}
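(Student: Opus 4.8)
\medskip\noindent\textbf{Proof proposal for Proposition~\ref{degen}.} The plan is to peel off the degenerate coordinate of $g$ — which contributes only a trivial binomial factor — and thereby reduce the computation of $d(T(D_S(h)))_{degen}$ to reading off the single deck $D_U(h)$ for a suitable $U\prec S$, up to a correction at the boundary of $\dom(h)$.

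Fix a degenerate $g:[|S|]\to\N$; without loss of generality $g(1)=0$ (if instead $g(|S|)=0$, run the same argument with $E=[|S|-1]$ in place of $E=[2,|S|]$ below, and with $U$ modified at its top end). Every card $C\in T(D_S(h))$ satisfies $C(1)\ge 0$, since $C$ is obtained by applying $T$ — which increments the first coordinate by $1$ — to a translate of $h$ or of $h'$, and $h,h'$ take values in $\{-1\}\cup\N$. Hence $\binom{C(1)}{g(1)}=\binom{C(1)}{0}=1$, so $F(C,g)=F(C|_E,g|_E)$ with $E=[2,|S|]$, and consequently the multiplicity of $g$ in $d(T(D_S(h)))$ equals the multiplicity of $g|_E$ in $d(M)$, where $M$ is the multiset $\{\,C|_E:C\in T(D_S(h))\,\}$. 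Thus it suffices to reconstruct $M$ from the given data.

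To reconstruct $M$, take $U=(S\setminus\{\min S\})\cup\{\min S+1\}$; then $\Delta(U)=\Delta(S)-1$ and $|U|\le|S|$, so $U\prec S$, and $D_U(h)$ — hence $T(D_U(h))$, which is computable from it — is available. The point is that for every translation index $i$ with $i+S\subseteq\dom(h)$ one has $T(h|_{i+S})|_E=T(h|_{i+U})|_E$, and likewise with $h'$: restricting to $E$ discards exactly the first coordinate, which is the only coordinate in which $S$ and $U$ differ, while the $+1$ that $T$ places on the last coordinate survives identically on both sides. The sole discrepancy is in which translates are admissible: the condition $i+U\subseteq\dom(h)$ holds for exactly one extra value of $i$ beyond those with $i+S\subseteq\dom(h)$ — the one placing $\min U$ at the endpoint $0$ of $\dom(h)$ — and for that $i$ the first coordinate of the corresponding card of $T(D_U(h))$ equals $h(0)+1=0$ (and likewise $h'(0)+1=0$). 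Therefore
\[
\#(c\in M)=\sum_{C\in T(D_U(h))} I\bigl(C(1)>0\bigr)\,I\bigl(C|_E=c\bigr),
\]
which is computable from $T(D_U(h))$; combined with the previous paragraph this yields $d(T(D_S(h)))_{degen}$.

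The step I expect to be the main obstacle is the boundary bookkeeping in the last paragraph: pinning down precisely that passing from $S$ to $U$ adds exactly one admissible translate for the $h$-family and one for the $h'$-family, and that these extra cards are exactly the ones annihilated by the indicator $I(C(1)>0)$. This is the reason for working with the padded function $h=\aux(f)$ rather than $f$ itself — the padding values $h(0)=h(k+1)=-1$ become $0$ after $T$, so the spurious translates appear as genuine cards with a zero endpoint instead of as ``missing'' translates, turning the correction into a clean indicator. A secondary technical point is the sub-case $\min S+1\in S$, in which $|U|=|S|-1$; there one must recheck the displayed identity with $E$ and $T$ suitably re-indexed, though the same mechanism applies.
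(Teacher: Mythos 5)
Your proposal is correct and follows essentially the same route as the paper's proof: the same reduction via $F(C,g)=F(C|_E,g|_E)$, the same choice of $U=(S\setminus\{\min(S)\})\cup\{\min(S)+1\}$, and the same boundary correction through the indicator $I(C(1)>0)$, exploiting the padding $h(0)=h(k+1)=-1$ (indeed your identification of the extra translate as the one placing $\min(U)$ at $0$ is the accurate version of the paper's footnote). The sub-case $\min(S)+1\in S$ that you flag, where $|U|=|S|-1$ and the restriction/indicator bookkeeping must be re-indexed, is glossed over in the paper as well, so your treatment matches the paper's level of detail.
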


We shall now make use of Section~\ref{mom}.
\begin{prp}\label{moments2slices} Suppose $|S| = s$, and $\ell \ge s\gamma(6s(n-k)+12s)$. Given $d_\ell(T(D_S(h))$, along with the number of vertices $n$ and the diameter $k+2$ of $G$, we can reconstruct $T(D_S(h))$ and $D_S(f)$.
\begin{proof} We shall apply Lemma~\ref{alg} to reconstruct $T(D_S(h))$. As $T$ is easily invertible, this allows us to reconstruct $D_S(h)$. Since $D_S(h) \subset D_S(f)$ and in fact $ g \in D_S(h) \setminus D_S(f)$ occurs if and only if $-1 \in \im(g)$, this allows us to reconstruct $D_S(f)$ as desired. 

\vs

Having been given $d_{\ell}(T(D_S(h)))$, we can compute 
\[\#(\vec{i} \in d(T(D_S(h)))) = \sum_{c \in T(D_S(h))} F(c,\vec{i})\]
for any $\vec{i} \in \{0,\dots, \lfloor \ell/s\rfloor\}^s$.

\vs

Now, recall that $|f|_1 = n-k$ is the number of leaves in $G$. Let $f^*:[k]\to \N$ be some other function where $|f^*|_1 = n-k$, and letting $h^* = \aux(f^*)$ we have that $T(D_S(h^*)) \neq T(D_S(h))$. We want to show that there exists $\vec{i} \in \{0,\dots, \lfloor \ell/s\rfloor \}^s$ such that the multiplicity of $\vec{i}$ in $d(T(D_S(h)))$ and $d(T(D_S(h^*)))$ differs.

\vs

Now, as $f,f^*$ both have a 1-norm of $n-k$, we get that $\sum_{c \in D_S(f)} |c|_1,\sum_{c \in D_S(f^*)} |c|_1$ are both at most $2s(n-k)$.\footnote{Indeed, for each index $j \in [k]$, there are at most $|S| = s$ translations of $S$ that contain $j$, and the factor of two comes from how we induce translates of $S$ in both $f$ and its reversal $f'$.} As $D_S(f),D_S(f^*)$ are multisets in $\N^s$, there are at most $2s(n-k)$ cards which are non-zero in either multiset (i.e., $\#(c \in D_S(f):c \neq 0^s),\#(c \in D_S(f^*): c\neq 0^s) \le 2s(n-k)$). Next, we let $t = \min\{\#(0^s \in D_S(f)),\#(0^s \in D_S(f^*))\}$ and $A$ be the multiset that just contains $t$ copies of $0^s$. We know that $D_S(f) =A\cup M_1, D_S(f^*) = A \cup N_1$ where $M_1,N_1$ are multisets having at most $2s(n-k)$ elements.

\vs 

Now, we have that $d$ is ``linear'', in the sense that for multisets $A,M,N\subset \N^s$, and $c,\vec{i} \in \N^s$, we will have that that
\[\#(\vec{i} \in d(M\cup\{c\})) - \#(\vec{i} \in d(N\cup\{c\})) = \#(\vec{i} \in d(M)) - \#(\vec{i} \in d(N))\]
\[\#(\vec{i} \in d(M\cup A)) - \#(\vec{i} \in d(N\cup A)) = \#(\vec{i} \in d(M)) - \#(\vec{i} \in d(N))\] (the second equality follows from the first by induction).

\vs

We observe that for any multiset $M$, that
\[\sum_{c \in T(M)} |c|_1 \le \sum_{c \in M} (|c|_1+2).\] Now, from the definition of $\aux$, it is clear that $|h|_1 = |f|_1+2, |h^*|= |f^*|+2$, which both are to equal $n-k+2$; this will imply that $\sum_{c \in D_S(h)}|c|_1, \sum_{c \in D_S(h^*)}|c|_1\le 2s(n-k+2)$. Also by the definition of $\aux$, it is clear that $D_S(h) = D_S(f) \cup M_2, D_S(h^*) = D_S(f^*)\cup N_2$, where $M_2,N_2$ are multisets each containing at most 4 cards. Writing $M = M_1 \cup M_2, N = N_1 \cup N_2$ which are multisets each having at most $2s(n-k)+4$ elements, we get that $D_S(h) = A \cup M, D_S(h^*) = A \cup N$.

\vs

Putting this together, we get
\[\sum_{c \in T(M)} |c|_1, \sum_{c \in T(N)} |c|_1 \le 2s(n-k+2) + 2(2s(n-k)+4) = 6s(n-k)+12s. \] By assumption, we chose $f^*$ so that $T(D_S(h^*)) \neq T(D_S(h))$, which implies that $T(N) \neq T(M)$. So, by Lemma~\ref{alg}, there must exist some vector $\vec{i} \in \{0,\dots, \gamma(6s(n-k) + 12s)\}^s$ such that \[\#(\vec{i} \in T(M))-\#(\vec{i} \in T(N)) = \sum_{c \in T(M)} F(c,\vec{i}) - \sum_{c \in T(N)} F(c,\vec{i}) \neq 0.\]By the ``linearity'' of $d$, we'll have that $\#(\vec{i} \in d(T(D_S(h)))) \neq \#(\vec{i} \in d(T(D_S(h^*)))$.

\vs

Meanwhile, we will have that $\lfloor \ell/s\rfloor \ge \gamma(6s(n-k)+12s)$ by assumption. Thus $\vec{i} \in \{0,\dots,\lfloor \ell/s\rfloor \}^s$, and thus given $d_\ell(T(D_S(h)))$ we can recognize $T(D_S(h)) \neq T(D_S(h^*))$. This allows us to reconstruct $T(D_S(h))$ and consequently $D_S(f)$, as desired. 
\end{proof}
\end{prp}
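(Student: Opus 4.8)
The plan is to derive the proposition from the moment–injectivity statement Lemma~\ref{alg}, after peeling off a large part of the deck that every candidate shares. First note that $T$ (adding $1$ to the first and last coordinate of a card) is an explicit bijection on multisets, and $g\in D_S(h)\setminus D_S(f)$ holds precisely when $-1\in\im(g)$; so it suffices to reconstruct $M:=T(D_S(h))$, after which $D_S(h)=T^{-1}(M)$ and then $D_S(f)$ follow. From $d_\ell(M)$ one can read off $\#(\vec i\in d(M))=\sum_{c\in M}F(c,\vec i)$ for every $\vec i\in\{0,\dots,\lfloor\ell/s\rfloor\}^s$, since such a vector has $|\vec i|_1\le s\lfloor\ell/s\rfloor\le\ell$. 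Since $n$ and the diameter determine $|f|_1=n-k$, the competing objects are exactly the decks $T(D_S(\aux(f^*)))$ with $f^*\colon[k]\to\N$ and $|f^*|_1=n-k$; it is thus enough to show that any two distinct such decks are separated by the values $\#(\vec i\in d(\cdot))$ over admissible $\vec i$.

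The one delicate point is that $D_S(f)$ contains the all-zero card $0^s$ with multiplicity of order $k$ (one per leafless window of the interior path), so Lemma~\ref{alg} cannot be fed $D_S(f)$ directly. To get around this, fix a competitor $f^*$, write $h^*=\aux(f^*)$, let $A$ consist of $\min\{\#(0^s\in D_S(f)),\#(0^s\in D_S(f^*))\}$ copies of $0^s$, and write $D_S(f)=A\cup M_1$, $D_S(f^*)=A\cup N_1$. Because $|f|_1=|f^*|_1=n-k$ and each leaf is ``read'' by at most $2s$ windows ($s$ translates of $S$, doubled for using $S$ reversed), both $D_S(f)$ and $D_S(f^*)$ have total $\ell_1$-mass at most $2s(n-k)$, hence at most $2s(n-k)$ nonzero cards. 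Moreover $D_S(f)$ and $D_S(f^*)$ have the \emph{same} total multiplicity $2(k-\Delta(S)+1)$, so the surplus of $0^s$-cards of $D_S(f)$ over those of $D_S(f^*)$ equals the deficit of nonzero cards; this yields $|M_1|\le\max\{\#\{c\in D_S(f):c\ne0^s\},\#\{c\in D_S(f^*):c\ne0^s\}\}\le 2s(n-k)$, and likewise $|N_1|\le 2s(n-k)$.

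Now pass from $f$ to $h=\aux(f)$: this enlarges $D_S(f)$ by at most $4$ cards and gives $\sum_{c\in D_S(h)}|c|_1\le 2s(n-k+2)$, while applying $T$ raises each $|c|_1$ by at most $2$. Writing $D_S(h)=A\cup M$, $D_S(h^*)=A\cup N$ with $|M|,|N|\le 2s(n-k)+4$, we obtain $\sum_{c\in T(M)}|c|_1,\sum_{c\in T(N)}|c|_1\le 2s(n-k+2)+2(2s(n-k)+4)\le 6s(n-k)+12s=:n'$, whereas $T(A)$ is common to $T(D_S(h))$ and $T(D_S(h^*))$. The operator $d$ is additive under disjoint union, so $\#(\vec i\in d(T(D_S(h))))-\#(\vec i\in d(T(D_S(h^*))))=\#(\vec i\in d(T(M)))-\#(\vec i\in d(T(N)))$ for every $\vec i$. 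If $T(D_S(h))\ne T(D_S(h^*))$ then $T(M)\ne T(N)$, i.e.\ their entry sequences are not related by a permutation; both have $\ell_1$-mass at most $n'$, so Lemma~\ref{alg} (with $\gamma_s\le\gamma$, valid since $s\le 3$) furnishes an $\vec i\in\{0,\dots,\gamma_s(n')\}^s\subseteq\{0,\dots,\gamma(n')\}^s$ at which $\sum_{c\in T(M)}F(c,\vec i)\ne\sum_{c\in T(N)}F(c,\vec i)$, hence at which the two decks' $d$-multiplicities differ. Since $\ell\ge s\gamma(n')$ forces $\lfloor\ell/s\rfloor\ge\gamma(n')$, this $\vec i$ is admissible, so $M=T(D_S(h))$ is the unique deck consistent with the given $\ell$-moment, which proves the proposition.

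The crux — and the only step beyond routine bookkeeping — is handling the zero card $0^s$, whose multiplicity in $D_S(f)$ is not bounded in terms of $n-k$, so a naive appeal to Lemma~\ref{alg} would be vacuous. Resolving it needs exactly the two ingredients above: the additivity of $d$ (to cancel the many common copies of $0^s$) and the observation that $D_S(f)$ and every competitor share the same total multiplicity (so what remains really has only $O(s(n-k))$ cards of total mass $O(s(n-k))$). The tracking of how $\aux$ and $T$ perturb the deck, and the final arithmetic relating $n'$ to the hypothesis $\ell\ge s\gamma(6s(n-k)+12s)$, are then straightforward.
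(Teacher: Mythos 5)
Your proposal is correct and follows essentially the same route as the paper's own proof: peel off the common copies of $0^s$, bound the $\ell_1$-mass of what remains by $6s(n-k)+12s$ after accounting for $\aux$ and $T$, and invoke Lemma~\ref{alg} together with the additivity of $d$ to find a distinguishing moment $\vec{i}$ with $\lfloor \ell/s\rfloor \ge \gamma(6s(n-k)+12s)$. Your explicit use of the fact that $D_S(f)$ and $D_S(f^*)$ have the same total multiplicity to bound $|M_1|,|N_1|$ is a welcome clarification of a step the paper states only tersely.
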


\begin{rmk}\label{rho}The affine function $\rho$ which appears in Lemma~\ref{cat2func} will take the explicit form $\rho:x\mapsto 18x+36$, which arises from plugging $s=3$ into Proposition~\ref{moments2slices}.
\end{rmk}

We are now ready to reconstruct $D_S(f)$.

\begin{proof}[Proof of Lemma~\ref*{cat2func}] In the prompt, we are given the degree sequence of $G$. This allows us to reconstruct $D_{\{1\}}(f)$. Indeed, if $d_1\le \dots \le d_n$ are the degrees of $G$, we have that $D_{\{1\}}(f)$ is exactly the non-negative values of $\{d_i -2\}$. As all singletons are translates of one another, and $|S| > 1 \implies \Delta(S) > 1$, this handles the case where $\Delta(S) = 1$. We note also that if $S$ is a singleton, that $T(D_S(h)) = T(D_S(f)) \cup \{1\}\cup\{1\}$.

\vs

We will now use induction to show that for any other $S$ where $\Delta(S) \le w, |S| \le 3$, that we can construct $d_{3\gamma(\rho(n-k))}(T(D_S(h)))$, which by Proposition~\ref{moments2slices} implies we can reconstruct $D_S(f)$. We induct on $\Delta(S)$, having already handled the case of $\Delta(S) = 1$.

\vs

Fix $1<w'\le w$ and assume we know $T(D_S(h))$ for all $S$ with $|S|\le 3$ and $\Delta(S)<w'$. Consider any $S$ with $|S|\le 3$ and $\Delta(S)=w'$. Note that as $\Delta(S) =w'>1$, we in fact have $|S| \ge 2$. By Proposition~\ref{nondegen}, as $\ell \ge \Delta(S) + 3\gamma(\rho(n-k))$ and $|S|\ge 2$, and we can reconstruct $d_{3\gamma(\rho(n-k))}(T(D_S(h)))_{non-degen}$. By Proposition~\ref{degen} and the inductive hypothesis (along with the fact that $|S|\ge 2$) we can also reconstruct $d(T(D_S(h)))_{degen}$. Thus, we have reconstructed $d_{3\gamma(\rho(n-k))}(T(D_S(h)))$, and thus by Proposition~\ref{moments2slices} may reconstruct both of the decks $T(D_S(h))$ and $D_S(f)$.
\end{proof}

\section{Unoriented Binary Strings}\label{binary strings}

\hide{Let $x\in \{0,1\}^k$ be a binary string and $x'$ denote its reversal (so that $x'(i) = x(k-i+1)$ for all $i \in [k]$). 

For $S \subset [k]$, we say that the $S$-subword in $x$ is the word $x_{S} := x(S_1) |x(S_2)| \dots |x(S_{|S|})$ (here $S_1< \dots < S_{|S|}$). For a set $S$, we let $i+S = \{i+a: a \in S\}$.

We let $D_S(x)$ denote a multiset of strings with $|S|$ letters, $M$, where the multiplicity of $w \in \{0,1\}^{|S|}$ is $\sum_{i  \in \Bbb{Z}} I(w = x_{i+S}) + I(w = x'_{i+S})$.

For a family of sets $F$, we let $D_F(x)$ denote a map from each element $S \in F$ to $D_S(x)$. We then let $D_{s,\ell}$ denote $D_F(x)$ where $F = \{ \{0\}\subset S \subset [0,\ell]:|S| \le s\}$.}

We let $\mathscr{D}_{s,\ell}(x)$ denote the map from each set $S$ where $\Delta(S) \le \ell,|S| \le s$ to $D_S(x)$. 

\vs

We shall prove 

\begin{lem} \label{reconstruct bin} Let $x,y \in \{0,1\}^k$ be two binary strings. If $x\not \sim y$, then $\mathscr{D}_{3,k/2+1}(x)\neq \mathscr{D}_{3,\lfloor k/2-1\rfloor+1}(y)$. 
\end{lem}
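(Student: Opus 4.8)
The plan is to reconstruct $x$ (up to reversal) from the decks $D_S(x)$ for the small, narrow sets $S$ allowed in $\mathscr{D}_{3,k/2+1}$, or equivalently to show that if $x \not\sim y$ then some such deck distinguishes them. I would begin by observing that the $2$-element deck $D_{\{1,d\}}(x)$, for $d$ ranging over $1,\dots,\lfloor k/2\rfloor+1$, already records, for every gap length $d$ in that range, how many positions $i$ have $(x(i),x(i+d))$ equal to each of $00,01,10,11$ (with the orientation-symmetrization folding $01$ and $10$ together). In particular the multiset of \emph{autocorrelations} $\{\,\#\{i : x(i)=x(i+d)=1\}\,\}$ for $d \le \lfloor k/2\rfloor+1$ is known, as is the weight $|x|_1 = \#\{i : x(i)=1\}$ from $D_{\{1\}}(x)$. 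So the first reduction is: reconstructing $x$ up to reversal is equivalent to reconstructing the set of $1$-positions $A = \{i : x(i)=1\} \subseteq [k]$ up to the reflection $i \mapsto k+1-i$, given partial difference/correlation information.

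The main work is a combinatorial ``turnpike''-style argument. Classically the turnpike (beltway) problem — reconstruct a point set from its full multiset of pairwise differences — is solvable by a backtracking procedure that repeatedly places the currently-largest unplaced difference. Here we have two extra constraints and one extra resource: we only see differences up to roughly $k/2$, but we also have access to the $3$-element decks $D_{\{1,d_1,d_2\}}(x)$ with $d_2 \le \lfloor k/2\rfloor+1$, which record the number of occurrences of each pattern on three positions at prescribed spacings (again up to reversal), i.e. \emph{ordered triple} correlation data. My plan is to run the turnpike reconstruction ``from the outside in'': the extreme elements $\min A$ and $\max A$ are pinned down (up to the global reflection) by the weight together with the largest gap that appears, then peel off elements alternately from the two ends, at each stage using a triple-correlation count $\#\{i : x(i) = x(i+d_1) = x(i+d_2) = 1\}$ with all spacings $\le \lfloor k/2\rfloor+1$ to decide where the next element goes and to resolve the branching ambiguity that plain pairwise turnpike suffers from. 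Because we are always attaching new points near one of the two ends, the spacings we need to query stay within $[0, \lfloor k/2\rfloor+1]$ even though $\mathrm{diam}(A)$ may be close to $k$; this is exactly why the $\lfloor k/2-1\rfloor+1$ width suffices on the $y$-side and $k/2+1$ on the $x$-side, matching the asymmetric statement. One must also handle the degenerate/near-symmetric configurations (when $x \sim x'$, or when the outside-in peeling reaches the middle) separately, checking that in those cases the collected $2$- and $3$-decks still determine $A$ up to reflection.

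Concretely the steps are: (i) set up the equivalence between $\mathscr{D}_{3,\cdot}(x)$-data and bounded pairwise/triple correlation data of the indicator set $A$; (ii) prove a ``two-ended turnpike'' lemma: a subset of $[k]$ is determined up to reflection by its weight, its pairwise-difference multiplicities for differences $\le \lfloor k/2\rfloor+1$, and its triple-correlation counts for spacing vectors with entries $\le \lfloor k/2\rfloor+1$, by induction on $|A|$, peeling from both ends; (iii) at the inductive step, show the candidate positions for the next peeled element are constrained to lie within distance $\lfloor k/2\rfloor+1$ of an already-placed end element, so all needed queries are in range, and use a triple count to kill the spurious branch (the standard turnpike backtracking either succeeds uniquely or produces a reflected copy); (iv) dispose of the symmetric case and the ``odd middle element'' case by direct inspection. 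The step I expect to be the genuine obstacle is (iii): controlling the backtracking so that every disambiguating query stays within the allowed width $\lfloor k/2\rfloor+1$ while still guaranteeing uniqueness up to reflection — in ordinary turnpike one is allowed to look at the full difference set, and cutting it off at half the diameter is precisely what forces the two-ended peeling and the use of triple correlations rather than mere pairwise ones. I would also want to double-check the off-by-one bookkeeping ($\lfloor k/2-1\rfloor+1$ versus $k/2+1$, and parity of $k$), since the statement's asymmetry between $x$ and $y$ suggests the extremal cases are tight and a sloppy bound would break them.
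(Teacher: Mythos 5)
There is a genuine gap here. Your reduction in step (i) treats the decks as (reversal-symmetrized) correlation counts — how many index pairs/triples at prescribed spacings carry each pattern — and then everything hinges on the ``two-ended turnpike'' lemma of steps (ii)--(iii), which you do not prove and yourself flag as the real obstacle. That lemma is the entire difficulty: even with the \emph{full} pairwise difference multiset, beltway/turnpike reconstruction is not unique in general, and you give no argument for why triple correlations with all spacings capped at $\lfloor k/2\rfloor+1$ kill every spurious branch. Worse, your opening move already fails on simple instances: you propose to pin down $\min A$ and $\max A$ ``by the weight together with the largest gap that appears,'' but when the ones of $x$ are pairwise farther apart than $\lfloor k/2\rfloor+1$ (e.g.\ $x$ with ones only at positions $1$ and $k$, versus only at $2$ and $k-1$) no $1$--$1$ difference is visible at all, so your peeling has nothing to start from; such strings are in fact distinguished only through boundary effects (patterns like $1|0$ near the ends of the string occur in different numbers), which your correlation-multiset framing does not exploit.

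The paper's proof avoids all of this by extracting \emph{position-specific} information rather than doing any turnpike-style search. The key device (Lemma~\ref{get xS}) is a differencing trick: comparing $D_{\{0\}\cup S}(x)$ and $D_{\{1\}\cup S}(x)$ restricted to the $S$-coordinates isolates the single translate sitting at the original position, recovering the unordered pair $\{x|_S,\,x'|_S\}$ for any $S$ of width at most roughly $k/2$. Applying this with $|S|=1$ and $|S|=2$ (Corollaries~\ref{2 decks} and~\ref{3 decks}) gives $\{x(i),x'(i)\}$ and $\{x(i)x(j),\,x'(i)x'(j)\}$ for all $i,j$ in the outer region $I=[\lfloor k/2-1\rfloor]\cup[k-\lfloor k/2-1\rfloor,k]$; one asymmetric index then serves as a reference to resolve all the positions where $x(i)\neq x'(i)$, which reconstructs $x|_I$ up to reversal, and the one or two middle positions are handled using $|\Supp(x)|$ plus one further deck computation. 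If you want to salvage your approach, the missing idea is precisely this localization: the bounded-width decks are not merely correlation counts, and you must use anchored sets and boundary effects to turn them into values at specific indices; absent that, the capped-width turnpike claim remains unsubstantiated. (Also, the asymmetry $k/2+1$ versus $\lfloor k/2-1\rfloor+1$ in the statement is not a structural feature to build on — the proof reconstructs $x$ from $\mathscr{D}_{3,\lfloor k/2-1\rfloor+1}(x)$ alone.)
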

This easily gives Lemma~\ref{func deck}.
\begin{proof}[Proof of Lemma~\ref{func deck} assuming Lemma~\ref{reconstruct bin}]
Let $f,g \in \N^k$ be such that $x\not \sim y$. It suffices to show that there exists $\pi:\N\to \{0,1\}$ so that for the binary strings $x=\pi\circ f,y = \pi \circ g$, we have $x\not \sim y$.

\vs

Now, for $j \in \N$, let $\pi_j$ be the indicator function for $j$, and $x_j,y_j$ respectively be $\pi_j \circ f,\pi_j\circ g$. As $f \neq g$, we have that there is $j_1\in \N$ so that $x_{j_1}\neq y_{j_1}$. Similarly, as $f\neq g'$, there is $j_2\in \N$ so that $x_{j_2} \neq (y_{j_2})'$. 

\vs

If we also had that $x_{j_1} \neq (y_{j_1})'$, then we would have that $x_{j_1} \not \sim y_{j_1}$ and so we are done. The same applies for $x_{j_2}$ and $y_{j_1}$, thus without loss of generality, we are left to assume that $(y_{j_1})' = x_{j_1} \neq y_{j_1} $ and $y_{j_2} = x_{j_2} \neq (y_{j_2})'$.

\vs

In this case, we can take $\pi = \pi_{j_1} +\pi_{j_2}$ (here addition is done in $\F_2$). Treating binary strings as vectors (in $\F_2$), we have that $x:=\pi\circ f = x_{j_1}+x_{j_2},y:=\pi\circ g = y_{j_1}+y_{j_2}$. We have that $x-y = x_{j_1}-y_{j_1}\neq 0^k$, and $x-y' = x_{j_2}-(y_{j_2})' \neq 0^k$, thus $x\not \sim y$.
\end{proof}

\vs

Given a multiset $M$ of strings with length $s$, for $S\subset [s]$, let $M|_S$ be the multiset of strings with length $|S|$, where the multiplicity of $w$ is $\sum_{a \in M} I(a|_S = w)$.

\begin{lem} \label{get xS} Let $\ell > C, s\ge 2$ and consider $S \subset [C]$ with $|S| < s$. For any $x \in \{0,1\}^k$, given $\mathscr{D}_{s,\ell}(x)$, we are able to reconstruct the multiset $x|_S, x'|_S$.
\begin{proof} Let $E = [|S|]+1 = \{2,\dots,|S|+1\}$. 

\vs

Now, observe that for $i < \min(S)$,  we have 
\begin{align*}\#(g \in D_{\{i\}\cup S}(x)|_E) &= \sum_{j \in \Z}I(j+i >0) (I(x|_{j+S} = g)+I(x'|_{j+S} = g)) \\
&= \#(g \in D_S(x)) - \sum_{j \in \Z}I(0\ge j+i)(I(x|_{j+S} = g)+I(x'|_{j+S} = g)).\\
\end{align*}
\vs
So, if $\min(S) \ge 2$, then $D_{\{1\}\cup S}(x)|_E \setminus D_{\{0\}\cup S}(x)|_E$ gives the multiset we wish to reconstruct (note that $\Delta(\{0\} \cup S) \le \Delta(\{0\}\cup[C]) = C+1 \le \ell$, thus its value is provided by $\mathscr{D}_{s,\ell}$). Meanwhile, if $\min(S) = 1$, we instead use $ D_S(x)\setminus D_{\{0\}\cup S}(x)|_E $.
\end{proof}
\end{lem}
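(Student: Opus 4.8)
The plan is to reconstruct the two-element multiset $\{x|_S, x'|_S\}$ --- the windows read off the coordinate set $S$ in its canonical (untranslated) position, in both orientations --- by isolating the untranslated contribution inside a suitably \emph{anchored} deck $D_T(x)$ for some $T$ with $|T|\le s$ and $\Delta(T)\le\ell$, which is therefore part of the data $\mathscr{D}_{s,\ell}(x)$.

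The key idea is to compare two anchored decks that differ by exactly one admissible translate. First suppose $\min(S)\ge 2$. Set $E=\{2,\dots,|S|+1\}$ and consider $D_{\{1\}\cup S}(x)$ and $D_{\{0\}\cup S}(x)$; both are available from $\mathscr{D}_{s,\ell}(x)$, since each of $\{0\}\cup S$, $\{1\}\cup S$ has at most $|S|+1\le s$ elements and width at most $C+1\le\ell$. Projecting each onto the coordinate block $E$ deletes the anchor coordinate and leaves, for each integer translate $i$ with $i+(\{b\}\cup S)\subseteq[k]$, exactly the pair $x|_{i+S}$, $x'|_{i+S}$ (for $b\in\{0,1\}$). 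The admissibility condition for $\{b\}\cup S$ is $i+b\ge 1$ and $i+\max(S)\le k$; hence the translates counted by $D_{\{1\}\cup S}(x)|_E$ are precisely those counted by $D_{\{0\}\cup S}(x)|_E$ together with the single extra translate $i=0$. Consequently $D_{\{0\}\cup S}(x)|_E$ is a sub-multiset of $D_{\{1\}\cup S}(x)|_E$, and the multiset difference $D_{\{1\}\cup S}(x)|_E\setminus D_{\{0\}\cup S}(x)|_E$ is exactly $\{x|_S, x'|_S\}$ (a single element of multiplicity two when $x|_S=x'|_S$), which is the object we want.

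The remaining case $\min(S)=1$ is handled the same way, except that $\{1\}\cup S=S$, so the top deck needs no anchor: one takes $D_S(x)$ (already a multiset of length-$|S|$ strings) and subtracts $D_{\{0\}\cup S}(x)|_E$, again killing every translate except $i=0$ and leaving $\{x|_S, x'|_S\}$.

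There is no real obstacle here; the argument is pure bookkeeping. The only points requiring care --- and the only places a referee might object --- are (i) checking that after sorting $\{b\}\cup S$ the last $|S|$ coordinates really are the coordinates of $S$ in increasing order, so that projecting onto $E$ yields $x|_{i+S}$ (and likewise $x'|_{i+S}$ for the reflected windows); and (ii) verifying that the admissibility ranges for $\{0\}\cup S$ and $\{1\}\cup S$ differ by exactly the translate $i=0$, so that no other window survives the subtraction. Both follow immediately from the definitions of $D_T(\cdot)$ and of the projection $M|_E$.
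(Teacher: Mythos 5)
Your proposal is correct and takes essentially the same approach as the paper: the identical anchor-and-project trick, comparing $D_{\{1\}\cup S}(x)|_E$ with $D_{\{0\}\cup S}(x)|_E$ (and $D_S(x)$ with $D_{\{0\}\cup S}(x)|_E$ when $\min(S)=1$) so that the multiset difference isolates the single translate $j=0$ and hence the pair $x|_S$, $x'|_S$. The width and size checks ($\Delta(\{0\}\cup S)\le C+1\le\ell$, $|S|+1\le s$) match the paper's as well, so there is nothing to add.
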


\begin{cor}\label{2 decks} Let $\ell > C$, and $i \in [C]$. For any $x \in \{0,1\}^k$, given $\mathscr{D}_{2,\ell}$, we are able to reconstruct $x(i)+x'(i)$.
\begin{proof} Apply Lemma~\ref{get xS} with $S=\{i\}$.
\end{proof}
\end{cor}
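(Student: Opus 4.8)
The plan is to read this off directly from Lemma~\ref{get xS}, with no extra work. I would take $s = 2$ and $S = \{i\}$; since $i \in [C]$ we have $S \subseteq [C]$ and $|S| = 1 < 2 = s$, so the hypotheses of Lemma~\ref{get xS} hold (the condition $\ell > C$ is exactly the one assumed here). Lemma~\ref{get xS} then tells us that from $\mathscr{D}_{2,\ell}(x)$ we can reconstruct the unordered multiset $\{\, x|_{\{i\}},\, x'|_{\{i\}} \,\}$ of length-one binary strings.

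The only remaining point is the trivial translation between this multiset and the quantity $x(i) + x'(i)$. A string in $\{0,1\}^{\{1\}}$ is determined by its single entry, so the multiset above is just $\{\, x(i),\, x'(i) \,\}$, a $2$-element multiset of elements of $\{0,1\}$. The assignment $\{a,b\} \mapsto a + b$ is a bijection from $2$-element multisets of $\{0,1\}$ onto $\{0,1,2\}$, so knowing $\{x(i), x'(i)\}$ is the same as knowing $x(i) + x'(i)$. This gives the claim. (Recall $x'(i) = x(k-i+1)$, so equivalently we are recovering $x(i) + x(k-i+1)$, which is the symmetric information about positions $i$ and $k-i+1$ that survives passing to the unoriented deck.)

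There is no genuine obstacle here; it is a one-line corollary. The only thing worth double-checking is bookkeeping internal to Lemma~\ref{get xS}: its proof forms the sets $\{0\}\cup S$, $\{1\}\cup S$, or $S$ itself, and these must lie in the domain of $\mathscr{D}_{2,\ell}$, i.e. have size at most $2$ and width at most $\ell$. With $S = \{i\}$ these sets have size $\le 2$ automatically, and width at most $\Delta(\{0\}\cup[C]) = C+1 \le \ell$, so everything is in range. Hence applying Lemma~\ref{get xS} with $S = \{i\}$ suffices.
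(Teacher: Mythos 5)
Your proposal is correct and matches the paper's proof exactly: the paper's argument is precisely "apply Lemma~\ref{get xS} with $S=\{i\}$," and your verification of the hypotheses plus the trivial passage from the multiset $\{x(i),x'(i)\}$ to the sum $x(i)+x'(i)$ is just a more explicit writeup of the same one-line argument.
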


\begin{cor} \label{3 decks}Let $\ell > C$, and distinct $i,j \in [C]\cup [k-C+1,k]$. For any $x \in \{0,1\}^k$, given $\mathscr{D}_{3,\ell}$, we are able to reconstruct $\{x(i)|x(j), x'(i)|x'(j)\}$.
\begin{proof} When $i<j \in [C]$, we may simply apply Lemma~\ref{get xS} with $S = \{i,j\}$. The rest of the cases are achieved through simple manipulation. 

\vs

If $i',j' = k-i+1,k-j+1$, then $x(i')|x(j') = x'(i)|x'(j)$, which will imply the outcome for $i',j'$ will be the same as the outcome for $i,j$. If $i,j = j_0,i_0$, then $x(i)|x(j) = x(j_0)|x(i_0)$. If $i,j = i_0,k-j_0+1$, then $x(i)|x(j) = x(i_0)|x'(j_0)$. \end{proof}\end{cor}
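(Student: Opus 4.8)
The plan is to reduce every case to the single case $i<j$ with $i,j\in[C]$, for which Lemma~\ref{get xS} applied to $S=\{i,j\}$ immediately returns the multiset $\{x|_{\{i,j\}},\,x'|_{\{i,j\}}\}$, which is exactly $\{x(i)|x(j),\,x'(i)|x'(j)\}$. Throughout write $\bar p:=k-p+1$, so that $x'(p)=x(\bar p)$, $\overline{\bar p}=p$, and $p\in[C]\iff\bar p\in[k-C+1,k]$. Each reduction will be carried out by applying a \emph{deterministic} operation to an output multiset of two length-$2$ binary strings; since the corollary only asks us to reconstruct such a multiset, this is enough.

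There are two easy operations. \emph{Reflection:} for any $i,j$ one has $\{x(\bar i)|x(\bar j),\,x'(\bar i)|x'(\bar j)\}=\{x'(i)|x'(j),\,x(i)|x(j)\}=\{x(i)|x(j),\,x'(i)|x'(j)\}$, so the answer for $(\bar i,\bar j)$ is literally the answer for $(i,j)$; as reflection interchanges $[C]$ with $[k-C+1,k]$, this lets us always assume at least one of $i,j$ lies in $[C]$, and it reduces ``both in $[k-C+1,k]$'' to ``both in $[C]$''. \emph{Letter reversal:} the answer for the swapped pair $(j,i)$ is obtained from that for $(i,j)$ by reversing each of the two length-$2$ strings, so we may swap $i$ and $j$ freely. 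After these two moves only two configurations remain: (a) $i<j$ with $i,j\in[C]$ (the base case), and (b) $i\in[C]$, $j\in[k-C+1,k]$.

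Case (b) is where the only real obstacle lies: $i$ and $j$ may be separated by nearly $k$ coordinates, so no window of width $\le\ell$ contains both and the raw correlation of $x(i)$ with $x(j)$ is invisible to $\mathscr{D}_{3,\ell}(x)$; the point is that the \emph{unordered} pair $\{x(i)|x(j),\,x'(i)|x'(j)\}$ is reversal-symmetric and can be assembled from local information after pulling $j$ back to the low end. Put $a:=i$ and $b:=\bar j$, both in $[C]$; since $x(j)=x(\bar b)=x'(b)$ and $x'(j)=x(b)$, the answer we want is $\{x(a)|x'(b),\,x'(a)|x(b)\}$. If $a=b$ (equivalently $j=\bar i$), this equals $\{x(i)|x'(i),\,x'(i)|x(i)\}$, which is a deterministic function of the two-element multiset $\{x(i),x'(i)\}$ returned by Lemma~\ref{get xS} with $S=\{i\}$ (equivalently Corollary~\ref{2 decks}). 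If $a\ne b$, apply Lemma~\ref{get xS} to $S=\{a,b\}\subset[C]$, obtaining (after reversing both strings if $a>b$) the multiset $\{x(a)|x(b),\,x'(a)|x'(b)\}$. The final observation is that $\{x(a)|x'(b),\,x'(a)|x(b)\}$ is a deterministic function of $\{x(a)|x(b),\,x'(a)|x'(b)\}$: if the two strings have equal first letters, i.e.\ $x(a)=x'(a)$, the two multisets coincide; otherwise the given multiset contains one string beginning with $0$ and one beginning with $1$, their second letters are precisely the bits $x(b)$ and $x'(b)$ (in a way we can read off), and swapping them yields the answer. This ``swap the second letters exactly when the first letters differ'' map is the third, and last, deterministic operation.

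It remains only to assemble the pieces: case (a) is Lemma~\ref{get xS} directly; ``$i>j$ in $[C]$'' follows by letter reversal; ``both in $[k-C+1,k]$'' by reflection followed by the $[C]$-case; case (b) as above; and ``$i\in[k-C+1,k]$, $j\in[C]$'' from case (b) by reflection. The routine work is verifying that the three operations are well defined on multisets and that the identity $x'(p)=x(\bar p)$ threads correctly through each case; the one genuinely nontrivial step is case (b) — noticing that although a short-window deck cannot see how $x(i)$ and $x(j)$ correlate when $i,j$ sit at opposite ends, the reversal symmetry of the target still lets us recover it from Lemma~\ref{get xS} applied to the reflected index $\bar j$.
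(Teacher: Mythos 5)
Your proposal is correct and follows essentially the same route as the paper: reduce to a set $S\subset[C]$ via the reflection $p\mapsto k-p+1$ and index swapping, apply Lemma~\ref{get xS}, and handle the mixed case through the identity $x(k-j_0+1)=x'(j_0)$. You additionally spell out the details the paper leaves as ``simple manipulation'' --- notably that the passage from $\{x(a)|x(b),\,x'(a)|x'(b)\}$ to $\{x(a)|x'(b),\,x'(a)|x(b)\}$ is well defined on the unordered multiset, and the degenerate subcase $j=k-i+1$ --- which is a worthwhile clarification but not a different argument.
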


\vs
Finally, we can reconstruct binary strings.
\begin{proof}[Proof of Lemma~\ref{reconstruct bin}]
Let $x\in \{0,1\}^k$ be any binary string, and let $x'$ be the reverse of $x$, so that $x'(i) = x(k-i+1)$ for all $ i \in [k]$. It suffices to show that we can reconstruct $x$ or $x'$ from $\mathscr{D}_{3,\lfloor k/2-1\rfloor +1}(x)$. We first define $I= [\lfloor k/2-1\rfloor]\cup[k-\lfloor k/2-1\rfloor ,k]$ and show that we can reconstruct $x|_I$ up to reversal (i.e., construct a string $y$ such that $y \in \{x|_I,x'|_I\}$).

\vs

For each $i\in I$ where $x(i) = x'(i)$, we have that $x(i) = (x(i)+ x'(i))/2$, which we can reconstruct by Corollary~\ref{2 decks}. Also, $x(i) = x'(i)$ if and only if $x(i) + x'(i) \neq 1$, which by Corollary~\ref{2 decks} we know how to recognize (for $i \in I$).

\vs

Now, let $T$ be the set of indices $i$ where $x(i) \neq x'(i)$. If $T = \{\}$, then it is clear how to fully reconstruct $x|_I$. Otherwise, we fix some arbitrarily chosen $i \in T$. WLOG, we may assume $x(i) = 1$, otherwise we may replace $x$ with $x'$ (which does not matter as we only wish to reconstruct $x$ up to reversal). Under this assumption, we may now reconstruct $x|_I$. Indeed, we can simply apply Corollary~\ref{3 decks} repeatedly for each $j  \in T \setminus \{i\}$ and observe that $x(j) =1 $ if and only if $1|1\in \{x(i)|x(j), x'(i)|x'(j)\}$.

\vs

Now suppose $k$ is odd. In this case, $I = [k]\setminus \{\lfloor k/2\rfloor\}$, thus the above reconstructs $x|_{[k]\setminus \{\lfloor k/2\rfloor\}}$ up to reversal. It then remains to reconstruct $x(k^*) = x'(k^*)$. Observing $D_{\{1\}}(x) = |\Supp(x)|$ simply counts the number of ones in $x$, we can easily determine $x(k^*)$ by seeing if $x$ has more ones than $x|_{[k]\setminus \{k^*\}}$.

\vs

Otherwise $k$ must be even. Now the above reconstructs $x|_{[k]\setminus \{k/2,k/2+1\}}$ up to reversal. As before, we may use $D_{\{1\}}(x)$ to reconstruct $|\Supp(x)|$, and we note that if $|\Supp(x)| -|\Supp(x_I)| \neq 1$ then $x(k/2) = x(k/2+1)= (|\Supp(x)| -|\Supp(x_I)|)/2$ in which case we can easily reconstruct $x$ up to reversal.

\vs

We are left to assume $|\Supp(x)| -|\Supp(x_I)| = 1$. Now if $x|_I = x'|_I$ then the rest of $x$ is symmetric and thus reconstruction is still straight-forward (taking $y \in \{0,1\}^k$ so that $y|_I = x|_I = x'|_I$ and $y(k/2) = 0,y(k/2+1) =1$, it is clear $y \in \{x,x'\}$ as desired). So we assume $x|_I \neq x'|_I$, and let $i$ be the largest index $<k/2$ such that $x(i) \neq x'(i)$. WLOG we assume $x(i) = 1$ otherwise we may replace $x$ with $x'$. We shall find out if $x(i)|x(k/2) = 1|1$ which will fully reconstruct $x$. 

\vs

By definition of the $S$-deck, \begin{align*}
    D_{\{i,k/2\}}(x) &= \{x(i)|x(k/2),x'(i)|x'(k/2)\} \cup \{x(i+1)|x(k/2+1),x'(i+1)|x'(k/2+1)\}\\&\cup \{x(j)|x(j+k/2-i),x'(j)|x'(j+k/2-i):j,j+k/2-i\in I\}.\\
\end{align*}Having reconstructed $x|_I$ (up to reversal), we can calculate the third multiset on the RHS. Also, by our choice of $i$ we can also calculate the second multiset on the RHS. If $i+1 \in I$ then $x(i+1) = x'(i+1) = b\in \{0, 1\}$ in which case the multiset is $\{b|0,b|1\}$, otherwise $i+1 = k/2$ in which case the multiset is $\{0|1,1|0\}$. Finally, we know the multiset on the LHS as $\Delta(\{i,k/2\}) \le k/2 = \lfloor k/2-1\rfloor +1$ (because we are told $\mathscr{D}_{3,\lfloor k/2-1\rfloor}(x)$). 

\vs

It follows that the first multiset on the RHS is completely determined. Observing that $x(i)|x(k/2) = 1|1$ if and only if $1|1$ is an element of $\{x(i)|x(k/2),x'(i)|x'(k/2)\}$ we are done.

\end{proof}

\hide{\section{Conclusions}

\subsection{Generalizations}\label{generalize}

We can easily apply these methods to try reconstructing other classes of graphs.

\subsection{Explicit constant for \texorpdfstring{Theorem~\ref{main}}{Theorem 1}}\label{explicit}
One natural question is how large is the $O(1)$-term in Theorem~\ref{main}? This requires an explicit bound for the function $\gamma$ defined in Section~\ref{mom}. We provide a ``back of the envelope'' calculation for this.

We now define $\gamma_0$ such that $\gamma_0(n) \ge \gamma(n)$ for all $n$. It suffices to find $n$}

\section{Acknowledgements}
The author would like to thank Carla Groenland and Tom Johnston for helpful discussion and for notifying him that this problem was of interest. The author further thanks Carla Groenland for helping improve the readability of the paper. The author would also like to thank Zachary Chase for helpful discussions. Finally, the author thanks Daniel Carter for his feedback on the proof of Lemma~\ref{nondegen} and some comments on the layout of the paper.

\vs

After releasing version 1 of this arXiv preprint, the author thanks Carla Groenland, Tom Johnston, and Daniel Carter for pointing out some typographical errors which have now been corrected.

\appendix
\section{An application of inclusion exclusion}\label{app}

Let $a: \N^s\to  \Bbb{Z}$ be a function with finite support. For $\vec{i} \in \N^s$, let \[\sigma_{\vec{i}}(a):= \sum_{\vec{x} \in \N^s} a(\vec{x}) \prod_{k  \in [s]}x_k^{i_k}.\] Also recall from Section~\ref{mom} our definition of \[\eta_{\vec{j},p}(a) = \sum_{\vec{x} \equiv_p \vec{j} } a(\vec{x})  = \sum_{\vec{x} \in \N^s} a(\vec{x}) \prod_{k \in [s]} I(x_k \equiv j_k \mod p).\] 

\vs

Given a set of functions $\mathcal{F}$, and a function $g$, we will say ``$g$ can be expressed as a $\Z$-linear combination of $\mathcal{F}$ (up to modulo $p$)'' if there exist coefficients $c_f \in \Z$ for $f \in \mathcal{F}$ so that 
\[\sum_{f\in \mathcal{F}} c_f f(a) \equiv g(a) \mod p\]for every function $a:\N^s\to \Z$ with finite support.

\vs

We wish to prove the following.

\begin{prp}
For each prime $p$ and $\vec{j}\in \N^s$, we may express $\eta_{\vec{j},p}(a)$ as a $\Z$-linear combination of $\mathcal{B}_p := \{\sigma_{\vec{i}}(a): \vec{i} \in \{0,\dots,p-1\}^s\}$ (up to modulo $p$).

\begin{proof}We will first define a family of intermediate functions $\mathcal{B}_{\vec{j},p}$. We will use basic number theory to express $\BB_{\vec{j},p}$ as a $\Z$-linear combination of $\mathcal{B}_p$, and then use a $\Z$-linear combination of $\BB_{\vec{j},p}$ to express $\eta_{\vec{j},p}(a)$ via inclusion-exclusion, as desired.

\vs

First, observe that for any $\vec{i} \in \{0,\dots, \ell\}^s, \vec{j} \in \{0,\dots, n\}^s$ we have that $\sigma_{\vec{i},\vec{j}}(a) := \sum_{\vec{x} \in \N^s} a(\vec{x}) \prod_{k \in [s]} (x_k - j_k)^{i_k}$ is in the $\Z$-span of $\mathcal{B}_p$. Indeed, we just use binomial expansion and polynomial multiplication.

\vs

We now wish to construct some indicator functions. We recall Fermat's little theorem which states $a^{p-1} \equiv 1 \mod p$ unless $a \equiv 0 \mod p$. Thus given a set $T \subset [s]$, choosing $\vec{i} = \vec{i}_{T,p}\in \N^s$ so that $i_k = (p-1)I(k \in T)$, we get
\[\sigma_{\vec{i},\vec{j}}(a)  \equiv \sum_{\vec{x}\in \N^s} a(\vec{x}) \prod_{k \in T} I(p \nmid x_k-j_k) \mod p.\]

\vs

Via inclusion-exclusion, ranging over the subsets $T$ of $[s]$ we can get $\eta_{\vec{j},p}(a) \mod p$, as desired. Specifically,
\[\eta_{\vec{j},p}(a) \equiv \sum_{T \subset [s]} (-1)^{|T|}\sigma_{\vec{i}_{T,p},\vec{j}}(a) \mod p.\]
\end{proof}\end{prp}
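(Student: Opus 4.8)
The plan is to isolate the congruence condition ``$\vec{x}\equiv_p\vec{j}$'' by combining Fermat's little theorem with inclusion--exclusion over the coordinates, working throughout modulo $p$. The only nontrivial point is that I should never need monomials of degree exceeding $p-1$ in any single variable, so that everything stays inside the span of $\mathcal{B}_p$.

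First I would pass from the ``raw'' moments $\sigma_{\vec{i}}(a)$ to the shifted moments centered at $\vec{j}$, namely $\sigma_{\vec{i},\vec{j}}(a):=\sum_{\vec{x}\in\N^s}a(\vec{x})\prod_{k\in[s]}(x_k-j_k)^{i_k}$. For each $\vec{i}$ with $i_k\le p-1$, expanding each factor $(x_k-j_k)^{i_k}$ by the binomial theorem and multiplying out expresses $\sigma_{\vec{i},\vec{j}}(a)$ as a $\Z$-linear combination of the $\sigma_{\vec{i}'}(a)$ with $i'_k\le i_k\le p-1$; hence $\sigma_{\vec{i},\vec{j}}(a)$ lies in the $\Z$-span of $\mathcal{B}_p$ whenever $\vec{i}\in\{0,\dots,p-1\}^s$. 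Next, for a subset $T\subseteq[s]$, let $\vec{i}_{T,p}$ be the exponent vector with $(\vec{i}_{T,p})_k=(p-1)I(k\in T)$. Fermat's little theorem gives $(x_k-j_k)^{p-1}\equiv I(p\nmid x_k-j_k)\pmod p$ for each $k$, and $(x_k-j_k)^0=1$, so
\[
\sigma_{\vec{i}_{T,p},\vec{j}}(a)\equiv\sum_{\vec{x}\in\N^s}a(\vec{x})\prod_{k\in T}I(p\nmid x_k-j_k)\pmod p .
\]
Each such quantity is, by the previous step, a $\Z$-linear combination of $\mathcal{B}_p$ modulo $p$.

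Finally I would run inclusion--exclusion over $T\subseteq[s]$: since $\prod_{k\in[s]}I(p\mid x_k-j_k)=\sum_{T\subseteq[s]}(-1)^{|T|}\prod_{k\in T}I(p\nmid x_k-j_k)$ for each fixed $\vec{x}$, summing against $a(\vec{x})$ yields
\[
\eta_{\vec{j},p}(a)=\sum_{\vec{x}\equiv_p\vec{j}}a(\vec{x})\equiv\sum_{T\subseteq[s]}(-1)^{|T|}\sigma_{\vec{i}_{T,p},\vec{j}}(a)\pmod p ,
\]
which exhibits $\eta_{\vec{j},p}(a)$ as a $\Z$-linear combination of $\mathcal{B}_p$ up to modulo $p$, as desired.

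\textbf{Main obstacle.} There is no deep difficulty here; the one thing to watch is bookkeeping of exponents. The reason the argument closes is precisely that $(x_k-j_k)^{p-1}$ is a polynomial of degree exactly $p-1$ in $x_k$, so the shifted monomials appearing never push any coordinate degree above $p-1$, keeping us inside $\mathcal{B}_p=\{\sigma_{\vec{i}}(a):\vec{i}\in\{0,\dots,p-1\}^s\}$; one should also note the linear identities are identities of functionals valid for every finitely supported $a:\N^s\to\Z$, so the coefficients $c_f\in\Z$ are universal as required by the definition of a $\Z$-linear combination modulo $p$.
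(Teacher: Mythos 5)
Your proposal is correct and follows essentially the same route as the paper: shifted moments $\sigma_{\vec{i},\vec{j}}$ expanded binomially into the span of $\mathcal{B}_p$, Fermat's little theorem to produce the indicators $I(p\nmid x_k-j_k)$ via the exponent vectors $\vec{i}_{T,p}$, and inclusion--exclusion over $T\subseteq[s]$ to recover $\eta_{\vec{j},p}(a)$ modulo $p$. Your explicit remarks on the exponent bookkeeping and on the universality of the coefficients are a slightly more careful write-up of steps the paper leaves implicit, but the argument is the same.
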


\hide{Let $a: \N^s\to  \Bbb{Z}$ be a function. For $\vec{i} \in \N^S$, let $\sigma_{\vec{i}}(a):= \sum_{\vec{x} \in \N^s} a(\vec{x}) \prod_{k  \in [s]}x_k^{i_k}$. Also recall from Section~\ref{mom} our definition of $\eta_{\vec{j},p}(a) = \sum_{\vec{x} \equiv_p \vec{j} } a(\vec{x})  = \sum_{\vec{x} \in \N^s} a(\vec{x}) \prod_{k \in [s]} I(x_k \equiv j_k \mod p)$. We shall now show how to express $\eta_{\vec{j},p}(a)\mod p$ for any prime $p$ and any $\vec{j} \in \N^s$ as a linear combination of $\mathcal{B}_p = \{\sigma_{\vec{i}}(a): \vec{i} \in \{0,\dots,p-1\}^s\}$.

\vs

Since $p$ is prime, for any integer $N$ we have that $N^{p-1}\equiv I(N) \mod{p}$, where $I(N)$ is the indicator function for $p \nmid N$. 

\vs

First, observe that for any $\vec{i} \in \{0,\dots, \ell\}^s, \vec{j} \in \{0,\dots, n\}^s$ we have that $\sigma_{\vec{i},\vec{j}}(a) := \sum_{\vec{x} \in \N^s} a(\vec{x}) \prod_{k \in [s]} (x_k - j_k)^{i_k}$ is in the span of $\mathcal{B}_p$. Given a set $T \subset [s]$, choosing $\vec{i} = \vec{i}_{T,p}\in \N^s$ so that $i_k = (p-1)I(k \in T)$, we get
\[\sigma_{\vec{i},\vec{j}}(a)  \equiv \sum_{\vec{x}\in \N^s} a(\vec{x}) \prod_{k \in T} I(p \nmid x_k-j_k) \mod p.\]

\vs

Via inclusion-exclusion, ranging over the subsets $T$ of $[s]$ we can get $\eta_{\vec{j},p}(a) \mod p$, as desired. Specifically,
\[\eta_{\vec{j},p}(a) \equiv \sum_{T \subset [s]} (-1)^{|T|}\sigma_{\vec{i}_{T,p},\vec{j}}(a) \mod p.\]}

\end{document}